\definecolor{darkred}{rgb}{0.6,0.2,0.2}
\definecolor{darkblue}{rgb}{0.2,0.2,0.6}
\definecolor{darkblue2}{rgb}{0.2,0.2,0.9}
\definecolor{superdarkblue}{rgb}{0.2,0.2,0.3}
\definecolor{citegreen}{rgb}{0.2,0.2,0.6}
\numberwithin{equation}{section}
\newcommand\nb{\nabla}
\newcommand{\beq}{\begin{equation} \begin{split}}
\newcommand{\eeq}{\end{split} \end{equation}}
\newcommand\Omg{\Omega}
\newcommand\ext{{\rm ext}}
\def\section{\@startsection{section}{1}\z@{.9\linespacing\@plus\linespacing}%
	{.7\linespacing} {\fontsize{13}{14}\selectfont\bfseries\centering}}
\def\paragraph{\@startsection{paragraph}{4}%
	\z@{0.3em}{-.5em}%
	{$\bullet$ \ \normalfont\itshape}}
\renewcommand\and{\qquad\text{and}\qquad}
\newcommand\sm{\setminus}
\newcommand{\comm}[1]{}
\def\sfH{\mathsf{H}}
\def\bm1{\mathbbm{1}}
\def\G{\Gamma}
\def\s{\sigma}
\def\p{\partial}
\def\sfW{\mathsf{W}}
\def\Re{{\rm Re}\,}
\def\arr{\rightarrow}
\renewcommand{\gg}{{\gamma}}
\def\tt{\theta}
\def\aa{\alpha}
\def\lm{\lambda}
\def\s{\sigma}
\def\sess{\sigma_{\rm ess}}
\def\ii{{\mathsf{i}}}
\def\p{\partial}
\def\kp{\kappa}
\def\sfH{\mathsf{H}}
\def\dd{{\mathsf{d}}}
\def\sfW{\mathsf{W}}
\def\sfU{\mathsf{U}}
\newcounter{counter_a}
\newenvironment{myenum}{\begin{list}{{\rm(\roman{counter_a})}}%
{\usecounter{counter_a}
\setlength{\itemsep}{1.ex}\setlength{\topsep}{0.8ex}
\setlength{\leftmargin}{5ex}\setlength{\labelwidth}{5ex}}}{\end{list}}
\newcommand{\eg}{{\it e.g.}\,}
\newcommand{\ie}{{\it i.e.}\,}
\newcommand{\cf}{{\it cf.}\,}
\numberwithin{figure}{section}
\numberwithin{equation}{section}
\theoremstyle{plain}% default
\newtheorem*{thm*}{Theorem}
\newtheorem{thm}{Theorem}[section]
\newtheorem{lem}[thm]{Lemma}
\newtheorem{prop}[thm]{Proposition}
\newtheorem{dfn}[thm]{Definition}
\theoremstyle{remark}
\newtheorem{remark}[thm]{Remark}
\theoremstyle{plain}
\newcommand{\supp}{\mathrm{supp}\,}
\newcommand{\beu}{\begin{equation*}}
\newcommand{\eeu}{\end{equation*}}
\newcommand{\besu}{\begin{equation*}
\begin{aligned}}
\newcommand{\eesu}{\end{aligned}
\end{equation*}}
\newcommand{\bes}{\begin{equation}
\begin{aligned}}
\newcommand{\ees}{\end{aligned}
\end{equation}}
\newcommand\cB{\mathcal B}
\newcommand\cH{\mathcal H}
\newcommand\cL{\mathcal L}
\newcommand\frh{\mathfrak h}
\newcommand\eps{\varepsilon}
\newcommand\ov{\overline}
\newcommand\void[1]{}
\def\ov{\overline}
\def\eps{\varepsilon}
\def\ran{{\rm ran\,}}
\def\frb{{\mathfrak b}}
      \def\dC{{\mathbb C}}
   \def\dN{{\mathbb N}}   
      \def\dR{{\mathbb R}}
   \def\dZ{{\mathbb Z}}
   \def\sfH{{\mathsf H}}
   \def\sfT{{\mathsf T}}   \def\sfU{{\mathsf U}}
   \def\sfW{{\mathsf W}}
   \def\cB{{\mathcal B}}   
   \def\cH{{\mathcal H}}   
      \def\cL{{\mathcal L}}
\newcommand{\dom}{\mathrm{dom}\,}
\newcommand{\dist}{\mathsf{dist}}
\renewcommand{\Re}{\mathsf{Re}}
\begin{document}

\title[An isoperimetric inequality for the perturbed Robin bi-Laplacian]{An isoperimetric inequality for the perturbed Robin bi-Laplacian in a planar exterior domain}

\author[V. Lotoreichik]{Vladimir Lotoreichik}
\address[V. Lotoreichik]{Department of Theoretical Physics, Nuclear Physics Institute, 	Czech Academy of Sciences, 25068 \v Re\v z, Czechia}
\email{lotoreichik@ujf.cas.cz}

\keywords{bi-Laplacian, Robin boundary condition, planar exterior domain, isoperimetric inequality, parallel coordinates, min-max principle}
	\subjclass[2020]{Primary: 35P15; Secondary: 35J40, 74K20, 49K20}
%\date{\today}

\maketitle
\begin{abstract}
	In the present paper we introduce the perturbed two-dimensional Robin bi-Laplacian in the exterior of a bounded simply-connected $C^2$-smooth open set. The considered perturbation is of lower order and corresponds to tension. We prove that the essential spectrum of this operator coincides with the positive semi-axis and
	that the negative discrete spectrum is non-empty if, and only if,
	the boundary parameter is negative. As the main result, we obtain an isoperimetric inequality for the lowest eigenvalue of such a perturbed Robin bi-Laplacian
	with a negative boundary parameter
	in the exterior of a bounded convex planar set under the constraint on the maximum of the curvature of the boundary with the maximizer being the exterior of the disk. The isoperimetric inequality is proved under the additional assumption that to the lowest eigenvalue for the exterior of the disk corresponds a radial eigenfunction. We provide a sufficient condition in terms of the tension parameter and the radius of the disk for this property to hold.
\end{abstract}
\section{Introduction}
The spectral analysis of the bi-Laplacian on a domain originates from applications in mechanics to the study of plates. Nowadays it is an independent 
mathematical area with many challenging open problems.
The (perturbed) Robin bi-Laplacian in a bounded domain is recently introduced by
Chasman and Langford in~\cite{CL20} and
in a more general form by Buoso and Kennedy in~\cite{BK22}. A physical motivation comes from the analysis of plates with elastic response of the boundary. This operator is of fourth-order and by saying that the bi-Laplacian is  perturbed we mean that the second-order term corresponding to tension is included.   Among other results  it is shown in~\cite{BK22} that balls are critical domains for any eigenvalue under the constraints of fixed volume or of fixed area of the boundary. In~\cite{CL20} the authors proved
under fixed volume constraint with certain restrictions on the parameters involved in the definition of the operator that the second eigenvalue of the perturbed Robin bi-Laplacian is maximized by the ball. An isoperimetric inequality for the lowest eigenvalue is not yet obtained in the literature even in the case of bounded domains.

The main aim of the present paper is to prove a
global optimization result for the lowest eigenvalue of the perturbed Robin bi-Laplacian in the complementary setting of a planar exterior domain, more precisely, exterior of a bounded planar convex set. This setting can be
physically motivated by study of multiply-connected bounded plates, for which the boundary has an inner component with an elastic response and a free outer component located on a sufficiently large distance, so that the influence of the outer component of the boundary can be neglected.  In the exterior of a bounded convex set
the cut-locus is empty and the parallel coordinates are globally well defined.
We take the advantage of this property in the construction of the test function in the proof of the isoperimetric inequality. Our methods are inspired by eigenvalue optimization for the usual Robin Laplacian in exterior domains considered
by Krej\v{c}i\v{r}\'{i}k and the author in~\cite{KL18, KL20} for the lowest eigenvalue and later by Exner and the author in~\cite{EL22} for the second eigenvalue.   

In the present paper we study the perturbed Robin bi-Laplacian in the exterior $\Omg^\ext := \dR^2\sm\ov{\Omg}$ of a bounded simply-connected $C^2$-smooth domain $\Omg\subset\dR^2$. So far, we do not assume that $\Omg$ is convex. This perturbed Robin bi-Laplacian is rigorously introduced
as a self-adjoint operator in $L^2(\Omg^\ext)$ via the first representation theorem as associated to the closed, densely defined, symmetric, and lower-semibounded quadratic form in the Hilbert space $L^2(\Omg^\ext)$ given by
\begin{equation}\label{eq:form_intro}
	H^2(\Omg^\ext)\ni u\mapsto 
	\int_{\Omg^\ext}\Big(|\nb \p_1 u|^2+ |\nb \p_2 u|^2 + \tau|\nb u|^2\Big)\dd x + \gg\int_{\p\Omg}|u|^2\dd \s,
\end{equation} 
where $H^2(\Omg^\ext)$ is the second-order $L^2$-based Sobolev space on $\Omg^\ext$, $\gg\in\dR$ is the boundary parameter, $\tau\ge 0$ is the tension parameter, and $\dd\s$ stands for the surface measure on the boundary $\p\Omg$ of $\Omg$. In the following we denote this operator by $\sfH_{\tau,\gg}^{\Omg^\ext}$. In the case $\tau = 0$ we get the non-perturbed Robin bi-Laplacian on $\Omg^\ext$. The quadratic form~\eqref{eq:form_intro} is reminiscent of the one in~\cite{BK22, CL20} upon replacement of a bounded domain by an unbounded exterior domain. The eigenvalue problem for $\sfH_{\tau,\gg}^{\Omg^\ext}$ is equivalent to the following formal spectral problem (see Appendix~\ref{app:BC})
\[
	\begin{cases}
		\Delta^2 u -\tau\Delta u = \lm u,&\qquad\text{in}~\Omg^\ext,\\[0.4ex]
		\displaystyle\frac{\p^2 u}{\p\nu^2} = 0,&\qquad\text{on}~\p\Omg\\[1.2ex]
		\displaystyle\frac{\p(\Delta u)}{\p\nu} + \frac{\p}{\p\tau}\left[\frac{\p^2 u}{\p\tau\p\nu} - \kp\frac{\p u}{\p\tau}\right]-\tau\frac{\p u}{\p\nu} + \gg u = 0,&\qquad\text{on}~\p\Omg,
	\end{cases}
\]
where $\lm$ is the spectral parameter, $\frac{\p}{\p\nu}$ stands for the normal derivative on $\p\Omg$ with the normal pointing outwards of $\Omg$, $\frac{\p}{\p\tau}$ stands for the  tangential derivative on $\p\Omg$, and where $\kp$ is the curvature of $\p\Omg$ which is non-negative provided that $\Omg$ is convex.

In the general setting we prove that the essential spectrum of $\sfH_{\tau,\gg}^{\Omg^\ext}$ coincides with the interval $[0,\infty)$ and that its negative discrete spectrum is non-empty if, and only if, $\gg < 0$.
In what follows we assume that $\gg < 0$ and denote by $\lm_1^{\tau,\gg}(\Omg^\ext) < 0$ the lowest eigenvalue of $\sfH_{\tau,\gg}^{\Omg^\ext}$. Our main result concerns the optimization of this eigenvalue under the assumption that $\Omg$ is convex. 
\begin{thm}\label{thm:main}
	Let $\cB\subset\dR^2$ be the disk centred at the origin of radius $R > 0$. Assume that the parameters $\tau\ge 0$, $\gg < 0$ and $R >0$ are such that  to the lowest eigenvalue $\lm_1^{\tau,\gg}(\cB^\ext) <0$ of $\sfH_{\tau,\gg}^{\cB^\ext}$ corresponds a radial eigenfunction (this property holds, in particular, under the assumption $\tau\ge\nobreak \frac{1}{R^2}$).
	Let $\Omg\subset\dR^2$ be a bounded $C^2$-smooth convex domain whose curvature $\kp$ satisfies $\max\kp \le\nobreak \frac{1}{R}$. Then the following inequality
	\[
		\lm_1^{\tau,\gg}(\Omg^\ext) \le \lm_1^{\tau,\gg}(\cB^\ext),
	\] 
	holds, in which the equality occurs if, and only if $\Omg$ and $\cB$ are congruent.
\end{thm}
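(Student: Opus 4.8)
The plan is to use the min--max characterisation of $\lm_1^{\tau,\gg}(\Omg^\ext)$ and to insert as a trial function the radial eigenfunction of the disk, transplanted to $\Omg^\ext$ through parallel (normal) coordinates. Convexity enters exactly here: for a bounded convex $C^2$-smooth set $\Omg$ the exterior $\Omg^\ext$ has empty cut-locus, so the map sending $(s,t)\in\p\Omg\times(0,\infty)$ to the point at distance $t$ from the boundary point of arc-length parameter $s$ is a global $C^1$-diffeomorphism onto $\Omg^\ext$; its area element is $(1+t\kp(s))\,\dd s\,\dd t$ with Jacobian $1+t\kp(s)\ge 1$ never vanishing (here $\kp\ge 0$), and the level curve $\{\dist(\cdot,\p\Omg)=t\}$ carries curvature $\kp(s)/(1+t\kp(s))$. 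Write $L:=|\p\Omg|$; since $\int_0^L\kp\,\dd s=2\pi$, the hypothesis $\max\kp\le 1/R$ forces $L\ge 2\pi R$, though this will not be needed.

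Let $\varphi$ be a radial eigenfunction of $\sfH_{\tau,\gg}^{\cB^\ext}$ for the eigenvalue $\lm:=\lm_1^{\tau,\gg}(\cB^\ext)<0$, and write $\varphi(x)=f(|x|-R)$. Because $\lm$ lies below $\inf\sigma_{\mathrm{ess}}=0$, the profile $f$ and its derivatives decay exponentially, and the competitor $u:=f(\dist(\cdot,\p\Omg))$ belongs to $H^2(\Omg^\ext)$. In parallel coordinates, with $t=\dist(x,\p\Omg)$ and $\kp=\kp(s)$, one computes $|\nb u|^2=|f'(t)|^2$ and, using the identity $\mathrm{Hess}\,u=f''(t)\,\nu\otimes\nu+f'(t)\,\mathrm{Hess}\,t$ with $\nu=\nb t$ together with the fact that $\mathrm{Hess}\,t$ has eigenvalues $0$ and $\kp/(1+t\kp)$,
\[
 |\nb\p_1 u|^2+|\nb\p_2 u|^2=|f''(t)|^2+|f'(t)|^2\,\frac{\kp^2}{(1+t\kp)^2}.
\]
Substituting into~\eqref{eq:form_intro} and integrating in $s$, the Rayleigh quotient of $u$ equals
\[
 \frac{\displaystyle\int_0^L N(\kp(s))\,\dd s+\gg\,|f(0)|^2\,L}{\displaystyle\int_0^L D(\kp(s))\,\dd s},
\]
where $N(k):=\int_0^\infty\big[(|f''|^2+\tau|f'|^2)(1+tk)+|f'|^2\,k^2/(1+tk)\big]\,\dd t$ and $D(k):=\int_0^\infty|f|^2(1+tk)\,\dd t$; running the same computation on $\cB^\ext$, where $u$ coincides with $\varphi$, shows $\lm=(N(1/R)+\gg|f(0)|^2)/D(1/R)$.

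It therefore suffices to prove that $g(k):=N(k)-\lm D(k)+\gg|f(0)|^2$ is $\le 0$ for all $k\in[0,1/R]$: this gives $\int_0^L g(\kp(s))\,\dd s\le 0$, i.e. $\int_0^L N(\kp(s))\,\dd s+\gg|f(0)|^2 L\le\lm\int_0^L D(\kp(s))\,\dd s$, and dividing by the positive quantity $\int_0^L D(\kp(s))\,\dd s$ shows that the Rayleigh quotient of $u$ is $\le\lm$, whence the min--max principle yields $\lm_1^{\tau,\gg}(\Omg^\ext)\le\lm$. Since $g(1/R)=0$, it is enough to show that $g$ is strictly increasing on $[0,1/R]$. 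Differentiating under the integral sign, which is legitimate by the exponential decay of $f$,
\[
 g'(k)=\int_0^\infty\Big[\big(|f''(t)|^2+\tau|f'(t)|^2-\lm|f(t)|^2\big)\,t+|f'(t)|^2\,\frac{k(2+tk)}{(1+tk)^2}\Big]\,\dd t,
\]
and every term of the integrand is nonnegative because $\tau\ge 0$, $\lm<0$ and $k\ge 0$; moreover $\big(|f''|^2+\tau|f'|^2-\lm|f|^2\big)\,t$ is strictly positive on a set of positive measure since $f\not\equiv 0$. Hence $g'(k)>0$ on $[0,1/R]$ and $g<0$ on $[0,1/R)$, which proves the inequality. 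For the rigidity: if $\Omg$ and $\cB$ are congruent the inequality is an equality by invariance of the operator under rigid motions; conversely, equality forces $\int_0^L g(\kp(s))\,\dd s=0$ with a continuous nonpositive integrand, hence $\kp\equiv 1/R$ on $\p\Omg$, and a planar convex curve of constant curvature $1/R$ is a circle of radius $R$, so $\Omg$ is congruent to $\cB$.

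The main obstacle is the second-order (Hessian) term, which is the genuinely new ingredient compared with the Robin Laplacian analyses of~\cite{KL18,KL20}. One must justify that $u=f(\dist(\cdot,\p\Omg))$ really belongs to $H^2(\Omg^\ext)$ and that its Hessian is given by the formula above; this hinges on the $C^2$-regularity of the distance function up to $\p\Omg$ (available because $\Omg$ is convex and $C^2$, so that $\Omg^\ext$ has no cut-locus) together with the decay of its derivatives at infinity, and then on the convergence of all the integrals appearing above --- in particular those weighted by the factor $t$ --- which in turn follows from the exponential decay of $f$ and its derivatives.
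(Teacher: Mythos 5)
Your proposal is correct and follows the same overall strategy as the paper: the min--max principle applied to a radial ground state of $\sfH_{\tau,\gg}^{\cB^\ext}$ transplanted onto $\Omg^\ext$ by parallel coordinates, with the same evaluation of the quadratic form on the test function (in particular the same new curvature term $\kp^2|f'|^2/(1+t\kp)^2$ in the Hessian energy). Where you genuinely diverge is the concluding comparison. The paper first integrates over $s$, bounds the curvature term by $L\int_0^\infty |f'|^2/(R(R+t))\,\dd t$ using the monotonicity of $x\mapsto x^2/(1+tx)$, and then compares the resulting weights $L+2\pi t$ and $L_\circ+2\pi t$ using $L>L_\circ=2\pi R$ together with $\lm_1^{\tau,\gg}(\cB^\ext)<0$. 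You instead prove the pointwise-in-$s$ inequality $g(\kp(s))\le g(1/R)=0$ via the monotonicity of $g$ on $[0,1/R]$; the single computation $g'(k)>0$ (each term nonnegative precisely because $\tau\ge0$, $k\ge 0$ and $\lm<0$) packages both of the paper's observations at once and renders the inequality $L>2\pi R$ unnecessary. It also delivers the rigidity statement cleanly: equality forces the continuous nonpositive integrand $g(\kp(\cdot))$ to vanish identically, hence $\kp\equiv 1/R$ and $\Omg$ is a disk of radius $R$.

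One small caveat: you justify $u\in H^2(\Omg^\ext)$ and the convergence of the $t$-weighted integrals (including the differentiation of $g$ under the integral sign) by an asserted, unproved exponential decay of $f$ and its derivatives. This assertion is dispensable: membership $u_\circ\in H^2(\cB^\ext)$ already gives $f,f',f''\in L^2((0,\infty);(t+R)\,\dd t)$, which is exactly the integrability needed for all the integrals you write down and for the dominated-convergence argument behind $g'(k)$; this is how the paper proceeds, and you should do the same rather than lean on decay you have not established.
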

This result is proved by the min-max principle via the transplantation of a ground-state eigenfunction for the exterior of the disk onto the exterior of $\Omg$ by means of parallel coordinates. In comparison with the
proof of an isoperimetric inequality for the lowest eigenvalue for the usual Robin Laplacian in an exterior
of a bounded convex domain~\cite{KL18} a new-type term involving the curvature of the boundary appeared in the Rayleigh quotient evaluated on the test function. The presence of this term is the reason to impose a geometric condition on the curvature in the main result. Similar term involving the curvature appeared for another reason in the optimization of the second eigenvalue for the Robin Laplacian
in the exterior of a bounded convex planar set considered in~\cite{EL22} and the geometric condition in Theorem~\ref{thm:main} is reminiscent of the one in that paper.  

There are some questions related to Theorem~\ref{thm:main} left open.
It remains an open problem to verify whether the same isoperimetric inequality as in Theorem~\ref{thm:main} holds under fixed perimeter constraint
and without the convexity assumption. Another open question is to check whether for some choice of the parameters $\tau\ge0,\gg < 0, R > 0$ to the lowest eigenvalue of $\sfH_{\tau,\gg}^{\cB^\ext}$ does not correspond a radial eigenfunction. In Theorem~\ref{thm:ef_disk}\,(i) we prove
using separation of variables a partial result in this direction that to this lowest eigenvalue corresponds either a radial eigenfunction or an eigenfunction that can be represented as $f(r)e^{\pm\ii\tt}$ in the polar coordinates $(r,\tt)$ with a suitable function $f\colon(R,\infty)\arr\dR$. However, it is not clear if the second (non-radial) case ever occurs and as was already mentioned under the assumption $\tau\ge\frac{1}{R^2}$ a ground-state of $\sfH_{\tau,\gg}^{\cB^\ext}$ can be chosen to be a radial function.

In the following we review isoperimetric inequalities for the bi-Laplacian
and related results for the usual Laplacian in order to position our work in the literature. It was conjectured by Lord Rayleigh~\cite{R} that the lowest eigenvalues of the Dirichlet Laplacian  and the Dirichlet bi-Laplacian (describing the clamped plate)  are minimized by disks (or by balls in higher dimensions) among domains of fixed volume. In the case of the Laplacian this result is proved by Faber~\cite{F23} and Krahn~\cite{K24} almost a century ago. The case of the bi-Laplacian turned out to be significantly more complicated. Relying on the technique developed by Talenti in~\cite{T81}, Nadirashvili proved in~\cite{N95} the analogue of the Faber-Krahn inequality for the Dirichlet bi-Laplacian in dimension $d = 2$. Ashbaugh and Benguria~\cite{AB95} again relying on Talenti's obtained such an isoperimetric inequality in dimension $d = 3$. The same question in dimensions $d \ge 4$ remains still open. 

For the Neumann Laplacian the lowest eigenvalue is zero and the most natural question concerns optimization of the second eigenvalue. It is proved in two dimensions by Szeg\H{o}~\cite{S54} and later in all dimensions by Weinberger~\cite{W56} that the ball maximizes the second Neumann eigenvalue among domains of fixed volume. The respective isoperimetric inequality for the perturbed Neumann bi-Laplacian corresponding to a plate under tension with free boundary was proved by Chasman in~\cite{C11}.

As for the Robin boundary condition, the results are not complete even in the setting of the usual Laplacian. For the positive boundary parameter it is proved by Bossel \cite{B86} in two dimensions and generalized by Daners~\cite{D06} to higher space dimensions that the ball is the minimizer of the lowest eigenvalue of the Robin Laplacian. For the negative boundary parameter it was conjectured by Bareket~\cite{B77} that the ball is the maximizer of the lowest eigenvalue. This conjecture was disproved by Freitas and Krej\v{c}i\v{r}\'{i}k~\cite{FK15} relying on the comparison of the ball with the spherical shell in the limit of the negative boundary parameter large by absolute value. However, the disk is the maximizer of the lowest eigenvalue for the negative boundary parameter under fixer perimeter constraint in two dimensions 
according to the result by Antunes, Freitas, and Krej\v{c}i\v{r}\'{i}k~\cite{AFK17}
and the ball in higher dimensions is the maximizer under fixed area of the boundary in the class of convex domains according to the result by Bucur, Ferone, Nitsch and Trombetti~\cite{BFNT19}.
It is conjectured in~\cite{AFK17} that in two dimensions the disk is still the maximizer under fixed area in constraint in the class of simply-connected domains and that in higher dimensions the result of~\cite{BFNT19} is valid without the convexity assumption. Optimization of the second Robin eigenvalue under fixed volume constraint for the negative boundary parameter is considered by Freitas and Laugesen in~\cite{FL21, FL20}. Their technique is adapted in~\cite{CL20} to
optimization of the second eigenvalue for the
perturbed Robin bi-Laplacian with a negative boundary parameter. The results for the second eigenvalue are proved under certain restrictions on the negative boundary parameter. Optimization of the lowest eigenvalue of the (perturbed) Robin bi-Laplacian is not yet considered in the literature besides very general criticality result in~\cite{BK22}.

The structure of the paper is as follows.
In Section~\ref{sec:general} we rigorously introduce the perturbed Robin bi-Laplacian in a planar exterior domain and characterise its essential and negative discrete spectra.
Further, in Section~\ref{sec:ext_disk} we analyse the perturbed Robin bi-Laplacian in the exterior of the disk.
In Section~\ref{sec:proof} we prove the main result of the paper stated in Theorem~\ref{thm:main}. The paper is complemented by Appendix~\ref{app:BC} where the domain of the perturbed Robin bi-Laplacian is partially characterised and by Appendix~\ref{app:aux} with some auxiliary computation in the exterior of the disk.
\section{The perturbed Robin bi-Laplacian in 
a planar exterior domain}\label{sec:general}
In this section we rigorously introduce the perturbed Robin bi-Laplacian in a planar exterior domain. We characterise its essential spectrum by
constructing singular sequences and
applying a compact perturbation argument. Moreover, we show existence of negative discrete spectra for the negative boundary parameter using a test function argument. 

Recall that $\Omg\subset\dR^2$ is a bounded simply-connected $C^2$-smooth domain. The boundary of $\Omg$ is denoted by $\p\Omg$.
The complement of $\Omg$ is defined by
\[
	\Omg^\ext := \dR^2\sm\ov{\Omg}.
\]
This complement $\Omg^\ext$ is an unbounded connected domain with compact boundary $\p\Omg^\ext = \p\Omg$. The domain $\Omg^\ext$ will be referred to as the exterior domain. 

For the boundary parameter $\gg\in\dR$
and the tension parameter $\tau\ge 0$ we introduce the quadratic form in the Hilbert space $L^2(\Omg^\ext)$ by
\begin{equation}\label{eq:form}
	\frh^{\Omg^\ext}_{\tau,\gg}[u]\! :=\! \int_{\Omg^\ext}\!\!\Big[
	|\nb\p_1 u|^2 + |\nb\p_2 u|^2 + \tau|\nb u|^2\Big]\dd x + \gg\int_{\p\Omg}|u|^2\dd\s,\quad \dom \frh^{\Omg^\ext}_{\tau,\gg}\! :=\! H^2(\Omg^\ext),
\end{equation} 
where $\dd\s$ is the surface measure on the curve $\p\Omg$.
This quadratic form is reminiscent of the one introduced in~\cite[Eq. (2.1)]{BK22} in which the bounded domain is replaced by an exterior domain
and in which we set by zero the Poisson ratio and the second boundary parameter.
A similar quadratic form for a bounded domain is also considered in~\cite[Sec. 2]{CL20}.

It is clear that the form $\frh_{\tau,\gg}^{\Omg^\ext}$ is symmetric and densely defined. The latter property is a direct consequence of the density of the Sobolev space $H^2(\Omg^\ext)$ in $L^2(\Omg^\ext)$.
Since we deal with an exterior domain case, we can not directly apply~\cite[Thm. 2.1]{BK22} to show that the densely defined symmetric  quadratic from $\frh_{\tau,\gg}^{\Omg^\ext}$ is closed and semi-bounded. Below we provide a proof of these properties.
\begin{prop}\label{prop:form}
	The quadratic form $\frh_{\tau,\gg}^{\Omg^\ext}$ defined in~\eqref{eq:form} is closed and semi-bounded.
\end{prop}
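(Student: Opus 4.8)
The plan is to deduce both properties from the single two-sided estimate
\[
	c\,\|u\|_{H^2(\Omg^\ext)}^2 - C\,\|u\|^2 \;\le\; \frh_{\tau,\gg}^{\Omg^\ext}[u] \;\le\; C\,\|u\|_{H^2(\Omg^\ext)}^2, \qquad u\in H^2(\Omg^\ext),
\]
(here and below $\|\cdot\| := \|\cdot\|_{L^2(\Omg^\ext)}$) with constants $c>0$ and $C\ge 0$ depending only on $\tau,\gg,\Omg$. Its left half is precisely semi-boundedness from below. Its two sides together say that the form norm $u\mapsto\big(\frh_{\tau,\gg}^{\Omg^\ext}[u] + (C+1)\|u\|^2\big)^{1/2}$ and the $H^2(\Omg^\ext)$-norm are equivalent; since $\big(H^2(\Omg^\ext),\|\cdot\|_{H^2}\big)$ is complete and $\dom\frh_{\tau,\gg}^{\Omg^\ext} = H^2(\Omg^\ext)$, the form is then closed. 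Throughout I write $Q[u] := \|\nb\p_1 u\|^2 + \|\nb\p_2 u\|^2$ for the leading part of the form, so that $\frh_{\tau,\gg}^{\Omg^\ext}[u] = Q[u] + \tau\|\nb u\|^2 + \gg\,\|u\|_{L^2(\p\Omg)}^2$.

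The upper bound is trivial: $Q[u]$, $\tau\|\nb u\|^2$ and $\|u\|^2$ are all dominated by $\|u\|_{H^2(\Omg^\ext)}^2$, and $\|u\|_{L^2(\p\Omg)}^2 \le C\|u\|_{H^1(\Omg^\ext)}^2$ by boundedness of the trace operator (the compactness and $C^2$-smoothness of $\p\Omg$ being used here). For the lower bound the only negative contribution is $\gg\|u\|_{L^2(\p\Omg)}^2$ when $\gg<0$, and I will estimate it via two auxiliary inequalities, each valid for every $\varepsilon>0$ with a suitable $C_\varepsilon>0$:
\[
	\|u\|_{L^2(\p\Omg)}^2 \le \varepsilon\,\|\nb u\|^2 + C_\varepsilon\,\|u\|^2
	\qquad\text{and}\qquad
	\|\nb u\|^2 \le \varepsilon\, Q[u] + C_\varepsilon\,\|u\|^2,\qquad u\in H^2(\Omg^\ext).
\]
The first follows by applying the divergence theorem on $\Omg^\ext$ to $|u|^2 F$, where $F\in C_c^\infty(\dR^2;\dR^2)$ is any vector field equal on $\p\Omg$ to the outward unit normal of $\Omg^\ext$, and then estimating by Cauchy--Schwarz and Young; the compact support of $F$ makes the argument licit on the unbounded domain. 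The second follows by composing a bounded Sobolev extension operator $E\colon H^2(\Omg^\ext)\to H^2(\dR^2)$ (which exists since $\p\Omg$ is $C^2$-smooth) with the elementary estimate $|\xi|^2 \le \varepsilon|\xi|^4 + (4\varepsilon)^{-1}$ on the Fourier side over $\dR^2$, and then absorbing into the left-hand side the copy of $\|\nb u\|^2$ implicit in $\|Eu\|_{H^2(\dR^2)}^2$.

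Combining the two inequalities with $\varepsilon$ small enough gives $|\gg|\,\|u\|_{L^2(\p\Omg)}^2 \le \tfrac12 Q[u] + C\|u\|^2$, so that
\[
	\frh_{\tau,\gg}^{\Omg^\ext}[u] \;\ge\; \tfrac12 Q[u] + \tau\|\nb u\|^2 - C\|u\|^2 \;\ge\; -C\|u\|^2,
\]
which is semi-boundedness (from below). Using $\|\nb u\|^2 \le \varepsilon\,Q[u] + C_\varepsilon\|u\|^2$ once more to reinstate the term $\|\nb u\|^2$ (needed only in the degenerate case $\tau = 0$) upgrades the last display to $\frh_{\tau,\gg}^{\Omg^\ext}[u] + (C+1)\|u\|^2 \ge c\,\|u\|_{H^2(\Omg^\ext)}^2$, which completes the two-sided estimate and hence the proof.

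Aside from these routine manipulations, the one real point — and the reason \cite[Thm.~2.1]{BK22} does not apply verbatim — is the unboundedness of $\Omg^\ext$, which forces the two auxiliary inequalities to be established directly on the exterior domain rather than borrowed from the bounded case. For the trace inequality this is harmless, since $\p\Omg$ is compact and the estimate localises near it; for the interpolation inequality the extension operator reduces everything to a one-line Fourier computation on $\dR^2$. I expect this interpolation step, together with the bookkeeping in the absorption arguments, to be where the little work the proof requires is concentrated.
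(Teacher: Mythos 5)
Your proof is correct and follows essentially the same route as the paper: an $\varepsilon$-trace inequality controlling $\|u\|_{L^2(\p\Omg)}^2$ by $\varepsilon\|\nb u\|^2+C_\varepsilon\|u\|^2$, an interpolation inequality absorbing $\|\nb u\|^2$ into the second-order part plus $\|u\|^2$, and then closedness from equivalence of the form norm with the $H^2(\Omg^\ext)$-norm (the paper packages the last step as KLMN via \cite[Thm.~VI~1.33]{Kato}, and cites \cite[Lem.~2.6]{BEL14} and \cite[Cor.~4.16]{Adams} for the two inequalities you prove directly). One small point of care in your interpolation step: after applying $|\xi|^2\le\varepsilon|\xi|^4+(4\varepsilon)^{-1}$ you are left with $(4\varepsilon)^{-1}\|Eu\|_{L^2(\dR^2)}^2$, so you need the extension operator to be bounded from $L^2(\Omg^\ext)$ to $L^2(\dR^2)$ as well (true for Stein- or reflection-type extensions, but not guaranteed by mere $H^2\to H^2$ boundedness); otherwise the large constant $(4\varepsilon)^{-1}$ would multiply the Hessian term and defeat the absorption.
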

\begin{proof}
	%The proof relies on certain %equivalences between the norms in %the Sobolev space $H^2(\Omg^\ext)$.
	First, we analyse the case $\gg = 0$. In this case the form $\frh_{\tau,0}^{\Omg^\ext}$ is non-negative and it only remains to show that it is closed. Then we pass to the analysis of the general case.
	
	Recall that by~\cite[Thm. 3.5]{Adams} the standard norm $\|\cdot\|_{H^2(\Omg^\ext)}$ in the Sobolev space $H^2(\Omg^\ext)$ (that makes it a separable Hilbert space) is defined by
	\begin{equation}\label{eq:norm1}
		\|u\|^2_{H^2(\Omg^\ext)} := 
		\|\nb\p_{1} u\|^2_{L^2(\Omg^\ext;\dC^2)}+
		\|\nb\p_{2} u\|^2_{L^2(\Omg^\ext;\dC^2)}
		+\|\nb u\|^2_{L^2(\Omg^\ext;\dC^2)}
		+\|u\|^2_{L^2(\Omg^\ext)}.
	\end{equation} 
	Let us consider simultaneously the norm
	induced by the quadratic form $\frh_{\tau,0}^{\Omg^\ext}$ via
	\begin{equation}\label{eq:norm2}
		\|u\|^2_{\frh_{\tau,0}^{\Omg^\ext}} := \frh_{\tau,0}^{\Omg^\ext}[u] + \|u\|^2_{L^2(\Omg^\ext)}.
	\end{equation}
	For $\tau > 0$ we obviously have that the norm $\|\cdot\|_{\frh_{\tau,0}^{\Omg^\ext}}$
	is equivalent to the norm in~\eqref{eq:norm1}. The case $\tau = 0$ is slightly more subtle.
	Since the domain $\Omg$ has the uniform cone property (see~\cite[\S 4.4]{Adams}), the standard norm in the Sobolev space $H^2(\Omg^\ext)$ introduced above is also equivalent by~\cite[Cor. 4.16]{Adams} to the norm induced in~\eqref{eq:norm1}.
	Hence, in both cases $\tau >0$ and $\tau = 0$ we conclude by~\cite[Dfn. 10.2]{S12} that the quadratic form $\frh^{\Omg^\ext}_{\tau,0}$ is closed.
	
	Now we consider the case $\gg \ne 0$.
	By~\cite[Lem. 2.6]{BEL14} for $\eps > 0$ there exists a constant $C(\eps) > 0$ such that
	\begin{equation}\label{eq:BEL}
		\int_{\p\Omg}|u|^2\dd\s \le \eps\|\nb u\|^2_{L^2(\Omg^\ext;\dC^2)} + C(\eps)\|u\|^2_{L^2(\Omg^\ext)},\qquad \forall\, u\in H^2(\Omg^\ext).
	\end{equation}
	From the equivalence of the norms defined in~\eqref{eq:norm1} and in~\eqref{eq:norm2} we get that there exists a constant $c > 0$ such that
	\[
		\|\nb u\|_{L^2(\Omg^\ext;\dC^2)}^2 \le 
		c\| u\|_{\frh_{\tau,0}^{\Omg^\ext}}^2,\qquad\forall\, u\in H^2(\Omg^\ext).
	\]
	Combining the above bound with~\eqref{eq:BEL} we obtain that for any $u\in H^2(\Omg^\ext)$
	\[
		\left|\gamma\int_{\p\Omg}|u|^2\dd \s\right| \le \eps |\gamma|c\cdot\frh_{\tau,0}^{\Omg^\ext}[u] + \left(C(\eps) +\eps c|\gamma|\right)\|u\|^2_{L^2(\Omg^\ext)}.
	\]
	Choosing $\eps \in(0, \frac{1}{|\gamma|c})$ we conclude that the quadratic form
	$H^2(\Omg^\ext) \ni u\mapsto \gamma\|u|_{\p\Omg}\|^2_{L^2(\p\Omg)}$
	is bounded with respect to $\frh_{\tau,0}^{\Omg^\ext}$ with the form bound $< 1$.
	Then by \cite[Thm. VI 1.33]{Kato}
	the quadratic form $\frh_{\tau,\gamma}^{\Omg^\ext}$ is closed and bounded from below.
\end{proof}
In the following definition we introduce the perturbed Robin bi-Laplacian in $\Omg^\ext$.
\begin{dfn}\label{def:op}
	The Robin bi-Laplacian $\sfH_{\tau,\gg}^{\Omg^\ext}$ in $\Omg^\ext$ with the boundary parameter $\gamma\in\dR$
	and the tension parameter $\tau\ge0$ is defined as the unique self-adjoint operator in the Hilbert space $L^2(\Omg^\ext)$ associated with the quadratic form $\frh_{\tau,\gamma}^{\Omg^\ext}$ in~\eqref{eq:form} via the first representation theorem~\cite[Thm. VI 2.1]{Kato}.
\end{dfn}
\begin{remark}
	From the above definition it immediately follows that
	$\dom\sfH_{\tau,\gg}^{\Omg^\ext}\subset H^2(\Omg^\ext)$.
	In Appendix~\ref{app:BC} we show that for any $u\in C^\infty_0(\ov{\Omg^\ext})\cap\dom\sfH_{\tau,\gg}^{\Omg^\ext}$ holds 
	\[
		\sfH^{\Omg^\ext}_{\tau,\gg} u = \Delta^2 u-\tau\Delta u
	\]
	and we find the boundary condition
	satisfied by any $u\in C^\infty_0(\ov{\Omg^\ext})\cap\dom\sfH_{\tau,\gg}^{\Omg^\ext}$. This boundary condition can be extended to the whole domain of $\sfH_{\tau,\gg}^{\Omg^\ext}$ in the sense of distributions. We do not pursue the goal to make this analysis in the present paper, since it is not necessary for the proof of our main result. 
\end{remark}	
In the next proposition we characterise the essential spectrum of the perturbed Robin bi-Laplacian $\sfH_{\tau,\gg}^{\Omg^\ext}$.
\begin{prop}\label{prop:ess}
	Let the self-adjoint operator $\sfH_{\tau,\gg}^{\Omg^\ext}$ be as in Definition~\ref{def:op}.
	Then $\sess(\sfH_{\tau,\gg}^{\Omg^\ext}) = [0,\infty)$ for any $\gg\in\dR$
	and $\tau\ge0$.
\end{prop}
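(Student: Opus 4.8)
The plan is to establish the two inclusions $\sess(\sfH_{\tau,\gg}^{\Omg^\ext})\subseteq[0,\infty)$ and $[0,\infty)\subseteq\sess(\sfH_{\tau,\gg}^{\Omg^\ext})$ separately, using a compact-perturbation argument for the first and an explicit singular (Weyl) sequence for the second. For the inclusion $[0,\infty)\subseteq\sess$, I would fix $\lm\ge 0$ and construct a singular sequence $(u_n)\subset\dom\sfH_{\tau,\gg}^{\Omg^\ext}$ by taking oscillating test functions supported far from $\p\Omg$, so that the Robin boundary term never sees them. Concretely, writing $\lm=k^4+\tau k^2$ with $k\ge 0$, one uses plane waves $e^{\ii k\,\xi\cdot x}$ for a fixed unit vector $\xi$ (which satisfy $\Delta^2 w-\tau\Delta w=\lm w$) multiplied by cutoffs $\chi_n(x)=\chi\big((x-x_n)/R_n\big)$ centred at points $x_n\to\infty$ with radii $R_n\to\infty$; after normalising in $L^2$, the commutator terms produced by the derivatives of $\chi_n$ carry negative powers of $R_n$ and hence $\|(\sfH_{\tau,\gg}^{\Omg^\ext}-\lm)u_n\|\to 0$, while $u_n\rightharpoonup 0$ weakly. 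Since the supports escape to infinity and eventually lie in $\Omg^\ext$ away from $\p\Omg$, each $u_n$ is a genuine element of the operator domain and the boundary term plays no role. This shows $\lm\in\sess$.

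For the reverse inclusion I would compare $\sfH_{\tau,\gg}^{\Omg^\ext}$ with the ``free'' fourth-order operator on $\dR^2$, namely $\sfH_0:=\Delta^2-\tau\Delta$ on $H^4(\dR^2)$, whose spectrum is purely essential and equals $[0,\infty)$ by the Fourier transform (the symbol is $|p|^4+\tau|p|^2\ge 0$). The idea is that $\sfH_{\tau,\gg}^{\Omg^\ext}$ differs from $\sfH_0$ only by ``local'' modifications near the compact boundary $\p\Omg$: excising the bounded set $\Omg$, imposing the natural (variational) boundary conditions at $\p\Omg$, and adding the boundary form $\gg\int_{\p\Omg}|u|^2\dd\s$. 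The cleanest route is to show that the resolvent difference $(\sfH_{\tau,\gg}^{\Omg^\ext}-z)^{-1}\oplus(\text{something bounded on }L^2(\Omg))-(\sfH_0-z)^{-1}$ is compact for some (equivalently all) $z\in\dC\sm[0,\infty)$, e.g.\ by a Glazman-type decomposition: introduce an intermediate operator on $L^2(\dR^2)=L^2(\Omg)\oplus L^2(\Omg^\ext)$ with an additional Neumann-type boundary condition on $\p\Omg$, note this decoupling is a finite-rank (or at least compact) perturbation in resolvent sense because $\p\Omg$ is compact and the fourth-order trace maps are compact from $H^2$ localised near $\p\Omg$ into boundary Sobolev spaces, and then invoke Weyl's theorem on stability of the essential spectrum under compact resolvent perturbations. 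The boundary form $\gg\int_{\p\Omg}|u|^2\dd\s$ itself is a relatively form-compact perturbation (by the trace estimate~\eqref{eq:BEL} together with compactness of the embedding $H^2\hookrightarrow L^2$ on a neighbourhood of $\p\Omg$), so it does not affect $\sess$ either.

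The main obstacle is the resolvent-compactness step for the reverse inclusion: unlike the second-order case, here the boundary conditions are of Neumann type at the level of a fourth-order operator, so one must handle traces of $u$, $\nb u$ and second-order derivatives on $\p\Omg$ and verify that gluing/ungluing along the compact curve $\p\Omg$ is compact in the resolvent sense. I would manage this by a localisation argument: fix a smooth partition of unity $1=\varphi^2+\psi^2$ on $\dR^2$ with $\varphi$ supported in a bounded neighbourhood of $\ov\Omg$ and $\psi$ supported away from $\p\Omg$; the piece cut off by $\psi$ is unitarily comparable to the free operator and contributes $[0,\infty)$, while the piece cut off by $\varphi$ lives on a bounded region and, together with all the boundary contributions, is relatively compact thanks to Rellich's theorem. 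Combining the two inclusions yields $\sess(\sfH_{\tau,\gg}^{\Omg^\ext})=[0,\infty)$ for every $\gg\in\dR$ and $\tau\ge 0$.
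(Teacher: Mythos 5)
Your singular-sequence construction for the inclusion $[0,\infty)\subseteq\sess(\sfH_{\tau,\gg}^{\Omg^\ext})$ is essentially the paper's Step~1: plane waves $e^{\ii p\cdot x}$ with $|p|^4+\tau|p|^2=\lm$, cut off on disjoint disks escaping to infinity inside $\Omg^\ext$, so that the boundary never sees the functions. For the reverse inclusion, however, you take a genuinely different and substantially harder route than the paper, and it is worth seeing why the detour is unnecessary. You propose to compare with the free operator $\Delta^2-\tau\Delta$ on all of $\dR^2$ via a Glazman decoupling along $\p\Omg$ (or a partition-of-unity localisation), which for a fourth-order operator forces you to control traces of $u$, $\nb u$ and second derivatives on $\p\Omg$ and to prove compactness of the ungluing in the resolvent sense — the very step you flag as the main obstacle, and the one your sketch leaves least developed (in particular, the decoupling is certainly not finite rank, and the commutator terms with the cutoffs are third-order operators whose relative compactness still has to be argued). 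The paper avoids all of this with two observations. First, for $\gg\ge 0$ the quadratic form $\frh_{\tau,\gg}^{\Omg^\ext}$ is manifestly non-negative, so $\s(\sfH_{\tau,\gg}^{\Omg^\ext})\subseteq[0,\infty)$ and the singular sequence already gives equality; no comparison with the whole-space operator is needed. Second, for $\gg<0$ one compares not with the free operator but with $\sfH_{\tau,0}^{\Omg^\ext}$ on the \emph{same} exterior domain: the second representation theorem gives, for $u=(\sfH_{\tau,\gg}^{\Omg^\ext}+a)^{-1}f$ and $v=(\sfH_{\tau,0}^{\Omg^\ext}+a)^{-1}g$, the identity $(\sfW f,g)=-\gg\,(u|_{\p\Omg},v|_{\p\Omg})_{L^2(\p\Omg)}$ for the resolvent difference $\sfW$, so $\sfW=-\gg\,\sfT_2^*\sfT_1$ with $\sfT_j=\G(\sfH_{\tau,\cdot}^{\Omg^\ext}+a)^{-1}$ compact because the trace maps $H^2(\Omg^\ext)$ into $H^{3/2}(\p\Omg)$, which embeds compactly into $L^2(\p\Omg)$. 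Weyl stability then finishes the proof. So your overall plan is viable in principle, but the paper's choice of reference operator turns the hard decoupling step into a two-line trace-compactness argument; if you wish to keep your route, the fourth-order decoupling compactness must actually be proved, not only asserted.
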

\begin{proof}
For clarity, we split the argument into two steps. First we deal with the case $\gg\ge0$ and then we address the case $\gg < 0$.
	
\noindent\emph{Step 1: the case $\gg \ge 0$.}
By Proposition~\ref{prop:BC} (in Appendix~\ref{app:BC})
the inclusion holds
$C^\infty_0(\Omg^\ext)\subset \dom\sfH_{\tau,\gg}^{\Omg^\ext}$ and for any $u\in C^\infty_0(\Omg^\ext)$ we have $\sfH^{\Omg^\ext}_{\tau,\gg} = \Delta^2 u-\tau\Delta u$. 
Let $\lm\in[0,\infty)$ be fixed. We aim at constructing a singular sequence for the operator $\sfH_{\tau,\gg}^{\Omg^\ext}$ corresponding to the point $\lm$. Let the real-valued non-negative function $\chi\in C^\infty_0(\dR^2)$ be chosen so that 
$\|\chi\|_{L^2(\dR^2)} = 1$ and $\supp\chi \subset\cB_1(0)$, where $\cB_R(x)$ denotes the disk centred at $x\in\dR^2$ of radius $R>0$.
Let the sequence of points $\{x_n\}_{n\in\dN}\subset\dR^2$ be chosen so that the sequence of disks $\{\cB_{n}(x_n)\}_{n\in\dN}$ is mutually disjoint and moreover $\cB_{n}(x_n)\subset\Omg^\ext$ for any $n\in\dN$. Let $p\in\dR^2$ be such that $\lm = |p|^4 +\tau|p|^2$.
Consider the sequence of functions
\begin{equation}
	u_n(x) = \frac{1}{n} \chi\left(\frac{x-x_n}{n}\right) e^{\ii p\cdot x}.
\end{equation}
It is easy to see that the functions $\{u_n\}_{n\in\dN}$ form an orthonormal family in the Hilbert space $L^2(\Omg^\ext)$. In particular, $u_n$ converges weakly to zero in $L^2(\Omg^\ext)$.

Now we compute the Laplacian of $u_n$
\[
	(\Delta u_n)(x) = 
	\left[
	\frac{1}{n^3}(\Delta \chi)\left(\frac{x-x_n}{n}\right) 
	+ \frac{2\ii p}{n^2}
		(\nabla\chi)\left(\frac{x-x_n}{n}\right)  -\frac{|p|^2}{n}\chi\left(\frac{x-x_n}{n}\right)\right]e^{\ii p\cdot x}. 
\]
Next we compute the bi-Laplacian of $u_n$
\[
\begin{aligned}
	\Delta^2 u_n &=
	 \left[
	 \frac{1}{n^5}(\Delta^2 \chi)\left(\frac{x-x_n}{n}\right) 
	 + \frac{2\ii p}{n^4}
	 (\nabla\Delta\chi)\left(\frac{x-x_n}{n}\right)  -\frac{|p|^2}{n^3}(\Delta\chi)\left(\frac{x-x_n}{n}\right)\right]e^{\ii p\cdot x}\\
	 &\qquad +2\ii p
	 	\left[
	 \frac{1}{n^4}(\nabla\Delta \chi)\left(\frac{x-x_n}{n}\right) 
	 + \frac{2\ii }{n^3}
	 (\nb(p\cdot\nabla\chi))\left(\frac{x-x_n}{n}\right)  -\frac{|p|^2}{n^2}(\nb\chi)\left(\frac{x-x_n}{n}\right)\right]e^{\ii p\cdot x}\\
	 &\qquad\qquad
	 +\left[
	 -\frac{|p|^2}{n^3}(\Delta \chi)\left(\frac{x-x_n}{n}\right) 
	 - \frac{2\ii |p|^2 p}{n^2}
	 (\nabla\chi)\left(\frac{x-x_n}{n}\right)  +\frac{|p|^4}{n}\chi\left(\frac{x-x_n}{n}\right)\right]e^{\ii p\cdot x}.   
\end{aligned}
\]
Combining the above expressions for
$\Delta u_n$ and $\Delta^2 u_n$ with the triangle inequality for the norm we obtain 
\[
\begin{aligned}
	\|\sfH_{\tau,\gg}^{\Omg^\ext} u_n \!-\! \lm u_n\|_{L^2(\Omg^\ext)} &\!\le\! 
	\frac{1}{n^4}\|\Delta^2\chi\|_{L^2(\dR^2)} + \frac{2|p|}{n^3}\|\nb\Delta\chi\|_{L^2(\dR^2;\dC^2)} 
	+ \frac{|p|^2}{n^2}\|\Delta \chi\|_{L^2(\dR^2)}\\
	&\quad\! +\! \frac{2|p|}{n^3}\|\nb\Delta\chi\|_{L^2(\dR^2;\dC^2)}\! +\! \frac{4|p|}{n^2}\|\nb(p\cdot\nb\chi)\|_{L^2(\dR^2;\dC^2)}\! +\! \frac{2|p|^3}{n}\|\nb\chi\|_{L^2(\dR^2;\dC^2)}\\
	&\qquad +\frac{|p|^2}{n^2}\|\Delta\chi\|_{L^2(\dR^2)} + \frac{2|p|^3}{n}\|\nb\chi\|_{L^2(\dR^2;\dC^2)}\\
	&\qquad\quad + \frac{\tau}{n^2}\|\Delta\chi\|_{L^2(\dR^2)} + \frac{2\tau|p|}{n}\|\nb\chi\|_{L^2(\dR^2;\dC^2)} \arr 0,\qquad \text{as}\,\,\,n\arr\infty.
\end{aligned}
\]
Hence, we conclude that $\{u_n\}_{n\in\dN}$ is a singular sequence for the operator $\sfH_{\tau,\gg}^{\Omg^\ext}$ corresponding to the point $\lm = |p|^4+\tau|p|^2$
and therefore by \cite[Prop. 8.11]{S12} we have $\lm\in\sess(\sfH_{\tau,\gg}^{\Omg^\ext})$. 
Since this construction can be performed for any $\lm\in[0,\infty)$ we infer that $[0,\infty)\subset\sess(\sfH_{\tau,\gg}^{\Omg^\ext})$.
From the expression for the form in~\eqref{eq:form} with $\gg \ge0 $ we easily see that the operator $\sfH_{\tau,\gg}^{\Omg^\ext}$ is non-negative. This observation results in
\[
	\s(\sfH_{\tau,\gg}^{\Omg^\ext}) =\sess(\sfH_{\tau,\gg}^{\Omg^\ext}) = [0,\infty),\qquad \forall\,\gg \ge0.
\]

\noindent \emph{Step 2: the case $\gg < 0$.}
 Let $a > 0$ be such that $-a < \inf\s(\sfH_{\tau,\gg}^{\Omg^\ext})$. Let $f,g\in L^2(\Omg^\ext)$ be arbitrary and define the functions
\[
	u:= \big(\sfH_{\tau,\gg}^{\Omg^\ext}+a\big)^{-1}f,\qquad v:= \big(\sfH_{\tau,0}^{\Omg^\ext}+a\big)^{-1}g.
\]
Clearly, we have $u,v\in H^2(\Omg^\ext)$.
Let us introduce the resolvent difference
\[
	\sfW := \big(\sfH_{\tau,\gg}^{\Omg^\ext}+a\big)^{-1} - 
	\big(\sfH_{\tau,0}^{\Omg^\ext}+a\big)^{-1}.
\]
We obtain that
\begin{equation}\label{eq:W}
\begin{aligned}
	(\sfW f,g)_{L^2(\Omg^\ext)} &= 
	\big(\big(\sfH_{\tau,\gg}^{\Omg^\ext}+a\big)^{-1}f,g\big)_{L^2(\Omg^\ext)} -
	\big(f,\big(\sfH_{\tau,0}^{\Omg^\ext}+a\big)^{-1}g\big)_{L^2(\Omg^\ext)}\\
	&
	=\big(u, \big(\sfH_{\tau,0}^{\Omg^\ext}+a\big)v)_{L^2(\Omg^\ext)}-
\big(\big(\sfH_{\tau,\gg}^{\Omg^\ext}+a\big)u, v\big)_{L^2(\Omg^\ext)}\\
&=
	\frh_{\tau,0}^{\Omg^\ext}[u,v] -\frh_{\tau,\gg}^{\Omg^\ext}[u,v] =
	-\gg(u|_{\p\Omg},v|_{\p\Omg})_{L^2(\p\Omg)}.
\end{aligned}
\end{equation}
Let us introduce the trace mapping $\G\colon H^2(\Omg^\ext)\arr L^2(\p\Omg)$, $\G u:= u|_{\p\Omg}$.  By the trace theorem~\cite[Thm. 3.37]{McL} we infer
that $\G$ is a bounded mapping and that $\ran\G = H^{3/2}(\p\Omg)$. Let us introduce the auxiliary operators $\sfT_1,\sfT_2\colon L^2(\Omg^\ext)\arr L^2(\p\Omg)$ by
\[	
\sfT_1 := \G\big(\sfH_{\tau,\gg}^{\Omg^\ext}+a\big)^{-1},\qquad\sfT_2 :=\G\big(\sfH_{\tau,0}^{\Omg^\ext}+a\big)^{-1}.
\]
In view of the inclusions $\dom\sfH_{\tau,\gg}^{\Omg^\ext}\subset H^2(\Omg^\ext)$ and $\dom\sfH_{\tau,0}^{\Omg^\ext}\subset H^2(\Omg^\ext)$ combined with the properties of the mapping $\G$ we get that $\sfT_1,\sfT_2$ are everywhere defined bounded operators.
Moreover, we infer that $\ran\sfT_j\subset H^{3/2}(\p\Omg)$, $j\in\{1,2\}$. Since $\p\Omg$ is compact
and $C^2$-smooth, it follows from~\cite[Thm. 4.2.2]{HW} that the embedding of $H^{3/2}(\p\Omg)$ into $L^2(\p\Omg)$ is compact. Hence, we conclude that the operators $\sfT_1$ and $\sfT_2$ are compact.

It follows from~\eqref{eq:W} that for any $f,g\in L^2(\Omg^\ext)$ one has
\[
	(\sfW f,g)_{L^2(\Omg^\ext)} = -\gg(\sfT_1f,\sfT_2 g)_{L^2(\p\Omg)}.  
\]
Thus, we obtain that 
\[
	\sfW = -\gg\sfT_2^*\sfT_1
\]
and therefore the operator $\sfW$ is compact. By the stability of the essential spectrum under perturbation compact in the sense of the resolvent difference we conclude that
\[
	\sess(\sfH_{\tau,\gg}^{\Omg^\ext}) = \sess(\sfH_{\tau,0}^{\Omg^\ext}) = [0,\infty).\qedhere
\]
\end{proof}	
In the next proposition we discuss the existence of the negative discrete spectrum for the operator $\sfH_{\tau,\gg}^{\Omg^\ext}$ with $\gg < 0$. In the proof of this proposition we use that the gradient in polar coordinates $(r,\tt)$ is given by
\begin{equation}\label{eq:grad}
	\nb = {\bf e}_r \p_r + {\bf e}_\tt \frac{\p_\tt}{r},
\end{equation} 
where the vectors ${\bf e}_r$ and ${\bf e}_\tt$ are defined by
\[
	{\bf e}_r 
	= 
	\begin{pmatrix} 
		\cos\tt\\ \sin\tt
	\end{pmatrix},
	\qquad
	{\bf e}_\tt 
	= 
	\begin{pmatrix} 
		-\sin\tt\\ \cos\tt
	\end{pmatrix}.
\]
In particular, it follows that the partial derivatives $\p_1$ and $\p_2$ can be expressed in polar coordinates as
\begin{equation}\label{eq:p1p2}
\begin{aligned}
	\p_1 &= 
	\cos\tt\p_r -\sin\tt\frac{\p_\tt}{r},\\
	\p_2 &= \sin\tt\p_r +\cos\tt\frac{\p_\tt}{r}.\\
\end{aligned}
\end{equation}

\begin{prop}\label{prop:ev}
	Let the operator $\sfH_{\tau,\gg}^{\Omg^\ext}$ be as in Definition~\ref{def:op}. The negative discrete spectrum of $\sfH_{\tau,\gg}^\Omg$ is non-empty for any
	$\tau\ge0$ and $\gg < 0$.
\end{prop}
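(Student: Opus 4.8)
The plan is to combine the variational (min-max) characterisation of the bottom of the spectrum with the fact, already established in Proposition~\ref{prop:ess}, that $\sess(\sfH_{\tau,\gg}^{\Omg^\ext}) = [0,\infty)$. It suffices to produce a single trial function $u\in\dom\frh_{\tau,\gg}^{\Omg^\ext} = H^2(\Omg^\ext)$ with $\frh_{\tau,\gg}^{\Omg^\ext}[u] < 0$. Indeed, this yields $\inf\s(\sfH_{\tau,\gg}^{\Omg^\ext}) < 0 = \inf\sess(\sfH_{\tau,\gg}^{\Omg^\ext})$, so by the min-max principle the operator $\sfH_{\tau,\gg}^{\Omg^\ext}$ has at least one eigenvalue of finite multiplicity strictly below the bottom of its essential spectrum, which is exactly the assertion.

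To construct such a trial function I would exploit the unboundedness of $\Omg^\ext$ and the planarity of the problem through a logarithmically decaying radial profile that is constant near $\p\Omg$. Fix $R_0 > 0$ with $\ov{\Omg}\subset\cB_{R_0}(0)$ and a cut-off $\zeta\in C^\infty([0,1])$ with $\zeta\equiv 1$ near $0$ and $\zeta\equiv 0$ near $1$. For $n\in\dN$, $n\ge 2$, set $\psi_n(r) := \zeta\big(\ln(r/R_0)/\ln n\big)$ for $r\in[R_0,R_0 n]$ and define
\[
	u_n(x) :=
	\begin{cases}
		1, & x\in\Omg^\ext,\ |x|\le R_0,\\[0.3ex]
		\psi_n(|x|), & R_0\le|x|\le R_0 n,\\[0.3ex]
		0, & |x|\ge R_0 n.
	\end{cases}
\]
Since $\zeta$ is constant near both endpoints of $[0,1]$, the function $u_n$ is smooth with compact support, hence $u_n\in H^2(\Omg^\ext)$; moreover $u_n\equiv 1$ on $\cB_{R_0}(0)\sm\ov{\Omg}$, and since $\p\Omg\subset\p\big(\cB_{R_0}(0)\sm\ov{\Omg}\big)$ with this set contained in $\Omg^\ext$, the trace satisfies $u_n|_{\p\Omg}\equiv 1$. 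Therefore the boundary term in $\frh_{\tau,\gg}^{\Omg^\ext}[u_n]$ equals $\gg\,|\p\Omg|$ for every $n$, where $|\p\Omg| := \int_{\p\Omg}\dd\s > 0$ is the length of $\p\Omg$.

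The core of the argument is to show that the positive part of the form, $\int_{\Omg^\ext}\big(|\nb\p_1 u_n|^2 + |\nb\p_2 u_n|^2 + \tau|\nb u_n|^2\big)\dd x$, tends to $0$ as $n\arr\infty$. For the radial function $u_n(x) = \psi_n(|x|)$ one has $\p_\tt u_n = 0$, so~\eqref{eq:grad} gives $|\nb u_n|^2 = \psi_n'(|x|)^2$, while~\eqref{eq:p1p2} gives $\p_1 u_n = \cos\tt\,\psi_n'$ and $\p_2 u_n = \sin\tt\,\psi_n'$, and applying~\eqref{eq:grad} once more yields $|\nb\p_1 u_n|^2 + |\nb\p_2 u_n|^2 = \psi_n''(|x|)^2 + \psi_n'(|x|)^2/|x|^2$. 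Passing to polar coordinates, the positive part of the form equals
\[
	2\pi\int_{R_0}^{R_0 n}\Big(\psi_n''(r)^2\,r + \frac{\psi_n'(r)^2}{r} + \tau\,\psi_n'(r)^2\,r\Big)\dd r ,
\]
and the substitution $\xi = \ln(r/R_0)/\ln n$ turns each of these three integrals into one carrying an overall factor $(\ln n)^{-1}$ (the first two also carrying a decaying weight $e^{-2\xi\ln n}$); a routine bound in terms of $\|\zeta'\|_{L^\infty}$ and $\|\zeta''\|_{L^\infty}$ then shows that each of them tends to $0$. Consequently $\frh_{\tau,\gg}^{\Omg^\ext}[u_n]\arr\gg\,|\p\Omg| < 0$ as $n\arr\infty$, so $\frh_{\tau,\gg}^{\Omg^\ext}[u_n] < 0$ for all sufficiently large $n$, which completes the proof. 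The one step that needs genuine care — and which I expect to be the crux — is the decay of the genuinely fourth-order contribution $\int_{R_0}^{R_0 n}\psi_n''(r)^2\,r\,\dd r$; this is precisely where the dimension being two enters, in the spirit of the logarithmic trial-function arguments for the exterior Robin Laplacian in~\cite{KL18}.
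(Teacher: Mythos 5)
Your proposal is correct and follows essentially the same route as the paper: combine Proposition~\ref{prop:ess} with the min-max principle and exhibit a trial function in $H^2(\Omg^\ext)$ that is essentially constant near $\p\Omg$ and whose second- and fourth-order energies vanish in a limit, leaving the negative boundary term dominant. The only difference is the concrete family of trial functions — you use compactly supported logarithmic cut-offs $\zeta(\ln(|x|/R_0)/\ln n)$ with $n\arr\infty$, whereas the paper uses $\exp(-|x|^\aa/2)$ with $\aa\arr0$ — and your estimates for the decay of $\int\psi_n''(r)^2\,r\,\dd r$, $\int\psi_n'(r)^2 r^{-1}\,\dd r$ and $\tau\int\psi_n'(r)^2\,r\,\dd r$ all check out.
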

\begin{proof}
	Without loss of generality we can assume that $0\in\Omg$ and 
	hence we have $0\notin\ov{\Omg^\ext}$. We denote by $\rho := \inf_{x\in\p\Omg}|x|$ the distance between the origin and $\p\Omg$.
	Consider the family of functions
	\[
		u_\aa(x) :=\exp\left(-\frac{|x|^\aa}{2}\right),\qquad \aa >0.
	\]
	The function $u_\aa$ is defined on the whole Euclidean plane, but we can also view it as a function on $\Omg^\ext$ by performing the restriction.
	It is clear that in this sense we have $u_\aa\in L^2(\Omg^\ext)$ for all $\aa > 0$. 
	
	Employing~\eqref{eq:grad} and~\eqref{eq:p1p2}
	we obtain for sufficiently small $\aa >0$ that
	\begin{equation}\label{eq:uaa_est1}
	\begin{aligned}
		I_\aa &:= \int_{\Omg^\ext}\left(|\nb\p_1 u_\aa|^2
		+|\nb \p_2 u_\aa|^2\right)\dd x\\
		& 
		\le
		\int_\rho^\infty\int_0^{2\pi}
		\left(\big|\nb\big(\cos\tt 
		\tfrac{\aa}{2} r^{\aa-1}e^{-r^\aa/2}\big)\big|^2 
		+
		\big|\nb\big(\sin\tt 
		\tfrac{\aa}{2} r^{\aa-1}e^{-r^\aa/2}\big)\big|^2\right)r\dd\tt\dd r \\
		&= 2\pi\int_{\rho}^\infty
		e^{-r^\aa}\left( \tfrac{\aa(\aa-1)}{2}r^{\aa-2}-
		\tfrac{\aa^2}{4}r^{2\aa-2} \right)^2r\dd r +2\pi\int_\rho^\infty \tfrac{\aa^2}{4} e^{-r^\aa} r^{2\aa-3}\dd r \\
		&\le 2\pi \int_{\rho}^\infty
		\left(\tfrac{\aa^2(\aa-1)^2}{4}
		r^{2\aa-3} + \tfrac{\aa^4}{16}r^{4\aa-3}
		-\tfrac{\aa^3(\aa-1)}{4}r^{3\aa-3}\right)\dd r +\tfrac{2\pi\aa^2}{4}\int_{\rho}^\infty r^{2\aa-3}\dd r\\
		&\le 2\pi\left(
		\frac{\aa^2(\aa-1)^2}{4(2-2\aa)\rho^{2-2\aa}} \!+\!\frac{\aa^4}{16(2-4\aa)\rho^{2-4\aa}} \!+\!\frac{\aa^3(1-\aa)}{4(2-3\aa)\rho^{2-3\aa}} \!+\!\frac{\aa^2}{4(2-2\aa)\rho^{2-2\aa}}
		\right).
	\end{aligned}	
	\end{equation}
	Thus, we infer that $u_\aa\in H^2(\Omg^\ext)$ for all $\aa >0$ and, moreover, $I_\aa\arr0$ as $\aa\arr0$.  
	Next, using~\eqref{eq:grad}
	we get
	\begin{equation}\label{eq:uaa_est2}
	\begin{aligned}
		K_\aa &:= \int_{\Omg^\ext}|\nb u_\aa|^2\dd x \le \frac{\pi\aa^2}{2}\int_0^\infty
		e^{-r^\aa}r^{2\aa-1}\dd r\\
		& =
		\frac{\pi\aa}{2}
		\int_0^\infty e^{-t}t\dd t
		=\frac{\pi\aa}{2}
		\arr 0,\qquad \aa \arr 0,
	\end{aligned}
	\end{equation}
	where we performed the substitution $
	t=r^\aa $. Furthermore, it follows from the Lebesgue dominated convergence theorem that
	\begin{equation}\label{eq:uaa_est3}
		L_\aa := \int_{\p\Omg} e^{-|x|^\aa}\dd\s(x)\arr e^{-1}|\p\Omg|,\qquad \aa \arr 0.
	\end{equation}
	
	Combining~\eqref{eq:uaa_est1},~\eqref{eq:uaa_est2}, and~\eqref{eq:uaa_est3} we obtain
	\begin{equation}\label{eq:hun}	
	\begin{aligned}
	\frh_{\tau,\gg}^{\Omg^\ext}[u_\aa]  
	=
	I_\aa + \tau K_\aa + \gg L_\aa \arr \gg e^{-1}|\p\Omg| < 0,\qquad\aa \arr 0.
	\end{aligned}
	\end{equation}
	It follows from~\eqref{eq:hun} that $\frh_{\tau,\gg}^{\Omg^\ext}[u_\aa] < 0$ for all sufficiently small $\aa>0$. 
	In view of the characterisation of the essential spectrum in Proposition~\ref{prop:ess}, the min-max principle~\cite[Thm. 12.1]{S12} yields the claim.  
\end{proof}

In what follows we denote by $\lm_1^{\tau,\gg}(\Omg^\ext) < 0$ the lowest eigenvalue of the operator $\sfH^{\Omg^\ext}_{\tau,\gg}$ for $\gg < 0$.

\section{The perturbed Robin bi-Laplacian in the exterior of a disk}\label{sec:ext_disk}
In this section we perform the analysis of the perturbed Robin bi-Laplacian in the exterior
$\cB^\ext := \dR^2\sm\ov{\cB}$ of the disk 
\[
	\cB = \cB_R = 
	\{x\in\dR^2\colon |x| < R\}.
\]
Our main aim is to find a sufficient condition under which a ground-state of $\sfH_{\tau,\gg}^{\cB^\ext}$ with $\gg < 0$ is given by a radial function. To this aim we perform the separation of variables in the polar coordinates and represent $\sfH_{\tau,\gg}^{\cB^\ext}$ as an orthogonal sum of one-dimensional fiber operators.  

Let us introduce the complete family
$\{\Pi_n\}_{n\in\dZ}$ of mutually orthogonal projections in the Hilbert space $L^2(\cB^\ext)$ based on the Fourier modes by
\begin{equation}\label{key}
	\Pi_n\colon L^2(\cB^\ext)\arr L^2(\cB^\ext),\quad (\Pi_n u)(r,\tt) = \frac{e^{\ii n \tt }}{2\pi}\int_0^{2\pi} u(r,\tt') e^{-\ii n\tt'}\dd\tt',\qquad n\in\dZ.
\end{equation}
The range of $\Pi_n$ can be naturally identified with the weighted $L^2$-space $L^2((R,\infty);r\dd r)$ via the unitary map
\begin{equation}\label{eq:Un}
	\sfU_n\colon\ran\Pi_n\mapsto L^2((R,\infty);r\dd r),\qquad
	(\sfU_nv)(r) :=
	\frac{1}{\sqrt{2\pi}}\int_{0}^{2\pi}
	v(r,\tt) e^{-\ii n\tt}\dd\tt.
\end{equation}
Hence, the family of projections $\{\Pi_n\}_{n\in\dZ}$ defines the orthogonal decomposition 
\begin{equation}\label{eq:ortho}
	L^2(\cB^\ext) = \bigoplus_{n\in\dZ}\ran\Pi_n\simeq
	 \bigoplus_{n\in\dZ}L^2((R,\infty);r\dd r). 
\end{equation} 
We use the abbreviation $\cH$ for the Hilbert space $\oplus_{n\in\dZ} L^2((R,\infty);r\dd r)$ and denote the respective inner product by $(\cdot,\cdot)_\cH$. We also denote by $(\cdot,\cdot)$ (respectively, by $\|\cdot\|$) the natural inner product in the Hilbert space $L^2((R,\infty);r\dd r)$ (respectively, the associated norm in the same Hilbert space); \ie
\[
	(f,g) = \int_R^\infty f(r)\ov{g(r)}r\dd r.
\]

Let the indices $n,m\in\dZ$ be fixed and let $u\in \ran\Pi_n\cap H^2(\cB^\ext)$ and $v\in\ran\Pi_m\cap H^2(\cB^\ext)$
be arbitrary. Clearly, there exist $f,g\in L^2((R,\infty);r\dd r)$ such that \[
	u(r,\tt) = \frac{e^{\ii n\tt}f(r)}{\sqrt{2\pi}}\quad\text{and}\quad 
	v(r,\tt) = \frac{e^{\ii m\tt}g(r)}{\sqrt{2\pi}}.
\]	
%Using the representation of the gradient in the polar coordinates~\eqref{eq:grad} we have for $n\ne m$
%\begin{equation}\label{eq:ortho_form0}
%	\int_{\cB^\ext} \nb u\ov{\nb v}\dd x =0,\qquad
%	\int_{\cB^\ext} |\nb u|^2\dd x =
%	\int_R^\infty\left(
%	|f'(r)|^2 +\frac{n^2|f(r)|^2}{r^2}\right)r \dd r.
%\end{equation}
%%
According to the computations in Appendix~\ref{app:aux} we get that for $n\ne m$
\begin{equation}\label{eq:ortho_form}
	\frh_{\tau,\gg}^{\cB^\ext}[u,v] = 0,\qquad
	\int_{\cB^\ext}\left(\nb\p_1 u\ov{\nb\p_1 v}+
	\nb\p_2 u\ov{\nb\p_2 v}\right)\dd x =0,\qquad\int_{\cB^\ext}\nb u\ov{\nb v}\dd x = 0.
\end{equation}
For $n\in\dZ$,  consider the following symmetric and densely defined quadratic form in the Hilbert space $L^2((R,\infty);r\dd r)$
\begin{equation}\label{eq:form_fiber}
\begin{aligned}
	\frh_{\tau,\gg,n}^{\cB^\ext}[f]& := \frh_{\tau,\gg}^{\cB^\ext}\left[\frac{f(r)e^{\ii n\tt}}{\sqrt{2\pi}}\right],\\
	\dom\frh^{\cB^\ext}_{\tau,\gg,n}& := 
		\big\{f\in L^2((R,\infty);r\dd r)\colon f(r)e^{\ii n\tt}\in H^2(\cB^\ext)\big\}.
\end{aligned}
\end{equation}
Using the computations in Appendix~\ref{app:aux}  and the expression~\eqref{eq:norm1} for the norm in the Sobolev space $H^2(\cB^\ext)$ we obtain an alternative representation for the quadratic form $\frh_{\tau,\gg,n}^{\cB^\ext}$, $n\in\dZ$,
\begin{equation}\label{eq:form_fiber2}
\begin{aligned}
	\frh_{\tau,\gg,n}^{\cB^\ext}[f]
	&\! =\!
	\int_R^\infty\left(|f''(r)|^2+\tau|f'(r)|^2\right)r \dd r\\ 
	&\quad\! +\!
	\int_R^\infty\left[
	\frac{\tau n^2|f(r)|^2}{r^2}
	\!+\!2n^2\left|\frac{f'(r)}{r}\!-\!\frac{f(r)}{r^2}\right|^2\! +\! \left|\frac{f'(r)}{r}\!-\!\frac{n^2f(r)}{r^2}\right|^2\right]r\dd r\! +\! \gg R|f(R)|^2,\\
	\dom\frh^{\cB^\ext}_{\tau,\gg,n}
	& \! = \!\big\{f\colon f,f',f''\in L^2((R,\infty);r\dd r)\big\}.
\end{aligned}
\end{equation}

\begin{lem}
	The quadratic form $\frh^{\cB^\ext}_{\tau,\gg,n}$, $n\in\dZ$, defined in~\eqref{eq:form_fiber} is closed and semi-bounded. 
\end{lem}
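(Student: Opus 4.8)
The plan is to deduce closedness and semi-boundedness of the fiber form $\frh^{\cB^\ext}_{\tau,\gg,n}$ from the corresponding properties of the full form $\frh^{\cB^\ext}_{\tau,\gg}$ already established in Proposition~\ref{prop:form}, together with the orthogonal decomposition~\eqref{eq:ortho}. Concretely, I would first observe that for $u = \tfrac{1}{\sqrt{2\pi}}f(r)e^{\ii n\tt}\in\ran\Pi_n\cap H^2(\cB^\ext)$ the map $\sfU_n$ in~\eqref{eq:Un} is a unitary identification of $\ran\Pi_n$ with $L^2((R,\infty);r\dd r)$ under which $\frh^{\cB^\ext}_{\tau,\gg}[u] = \frh^{\cB^\ext}_{\tau,\gg,n}[f]$ by definition~\eqref{eq:form_fiber}. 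The orthogonality relations~\eqref{eq:ortho_form} show that the full form is the orthogonal (direct) sum of its restrictions to the subspaces $\ran\Pi_n$, i.e.\ $\frh^{\cB^\ext}_{\tau,\gg} = \bigoplus_{n\in\dZ}\frh^{\cB^\ext}_{\tau,\gg,n}$ under the identification~\eqref{eq:ortho}, with the subtlety that one must check $\dom\frh^{\cB^\ext}_{\tau,\gg} = H^2(\cB^\ext)$ decomposes as $\bigoplus_n\dom\frh^{\cB^\ext}_{\tau,\gg,n}$; this is where the explicit characterisation $\dom\frh^{\cB^\ext}_{\tau,\gg,n} = \{f\colon f,f',f''\in L^2((R,\infty);r\dd r)\}$ from~\eqref{eq:form_fiber2} together with the norm formula~\eqref{eq:norm1} is needed, since it shows $f(r)e^{\ii n\tt}\in H^2(\cB^\ext)$ iff $f,f',f''\in L^2((R,\infty);r\dd r)$, so the Fourier decomposition of an $H^2$-function is exactly the $\ell^2$-sum of fiber-domain components.

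For semi-boundedness I would argue directly: since $\frh^{\cB^\ext}_{\tau,\gg}$ is bounded below by some $-c$ (Proposition~\ref{prop:form}), and $\frh^{\cB^\ext}_{\tau,\gg}[u]\ge -c\|u\|^2$ for every $u$, in particular for $u\in\ran\Pi_n\cap H^2(\cB^\ext)$ we get $\frh^{\cB^\ext}_{\tau,\gg,n}[f]\ge -c\|f\|^2$ with the \emph{same} constant $c$, uniformly in $n$. Closedness I would obtain from the form-bound argument mirroring the proof of Proposition~\ref{prop:form}: the unperturbed fiber form $\frh^{\cB^\ext}_{\tau,0,n}$ (the case $\gg=0$ in~\eqref{eq:form_fiber2}) is non-negative and, being the restriction of the closed non-negative form $\frh^{\cB^\ext}_{\tau,0}$ to the closed reducing subspace $\ran\Pi_n$, is itself closed; alternatively one reads off from~\eqref{eq:form_fiber2} that the form norm $\frh^{\cB^\ext}_{\tau,0,n}[f]+\|f\|^2$ is equivalent to the weighted Sobolev norm $\|f\|^2+\|f'\|^2+\|f''\|^2$ (all $L^2((R,\infty);r\dd r)$), which makes $\dom\frh^{\cB^\ext}_{\tau,0,n}$ complete. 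Then the boundary term $\gg R|f(R)|^2$ is controlled: from~\eqref{eq:BEL} specialised to $\cB^\ext$ and restricted to the mode-$n$ subspace (or from a direct one-dimensional trace estimate $|f(R)|^2\le\eps\|f'\|^2+C(\eps)\|f\|^2$), it is $\frh^{\cB^\ext}_{\tau,0,n}$-bounded with relative bound $<1$, uniformly in $n$, and \cite[Thm. VI 1.33]{Kato} gives closedness and semi-boundedness of $\frh^{\cB^\ext}_{\tau,\gg,n}$.

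The main obstacle I anticipate is the bookkeeping needed to transfer these properties cleanly rather than the analysis itself: one must be careful that $\ran\Pi_n$ is a \emph{reducing} subspace for the form in the precise sense that $u\in H^2(\cB^\ext)\Rightarrow\Pi_n u\in H^2(\cB^\ext)$ and $\frh^{\cB^\ext}_{\tau,\gg}[\Pi_n u]\le\frh^{\cB^\ext}_{\tau,\gg}[u]$ up to the decomposition, which is exactly what~\eqref{eq:ortho_form} and the norm formula~\eqref{eq:norm1} provide, and that the one-dimensional trace inequality at $r=R$ holds with a constant not depending on $n$ (it does, since the trace at the fixed radius $R$ only sees the radial variable). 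Given the machinery already in place from Proposition~\ref{prop:form}, Appendix~\ref{app:aux}, and the lemma on the essential spectrum, the argument is short; the bulk of the work has effectively been done in establishing~\eqref{eq:form_fiber2} and~\eqref{eq:ortho_form}.
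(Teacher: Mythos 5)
Your proposal is correct and, at its core, coincides with the paper's proof: semi-boundedness is inherited from Proposition~\ref{prop:form} by restriction to $\ran\Pi_n$, and closedness follows because $\frh^{\cB^\ext}_{\tau,\gg,n}$ is (up to the unitary $\sfU_n$) the restriction of the closed form $\frh^{\cB^\ext}_{\tau,\gg}$ to the closed subspace $\ran\Pi_n$ --- exactly the Cauchy-sequence argument the paper writes out for general $\gg$. The only difference is that the paper applies this restriction argument directly to $\frh^{\cB^\ext}_{\tau,\gg}$ with the boundary term already included, so your extra layer (treating $\gg=0$ first and then re-running the Kato form-perturbation argument with a one-dimensional trace estimate at $r=R$) is a valid but unnecessary detour.
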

\begin{proof}
	Let $n\in\dZ$ be fixed.
	Recall that by Proposition~\ref{prop:form} the quadratic form $\frh_{\tau,\gg}^{\cB^\ext}$ is lower semi-bounded. Let $c\le0$ be such that $\frh_{\tau,\gg}^{\cB^\ext}[u] \ge c\|u\|^2_{L^2(\cB^\ext)}$ for all $u\in H^2(\cB^\ext)$. It follows from the definition of the form $\frh_{\tau,\gg,n}^{\cB^\ext}$ that
	$\frh_{\tau,\gg,n}^{\cB^\ext}[f] \ge 
		c\|f\|^2$ for all $f\in\dom\frh_{\tau,\gg,n}^{\cB^\ext}$.	%
	Hence, the quadratic form $\frh_{\tau,\gg,n}^{\cB^\ext}$ is also lower semi-bounded.
	
	It remains to show that the form $\frh_{\tau,\gg,n}^{\cB^\ext}$ is closed.
	Let the sequence of functions $\{f_m\}_{m\in\dN}$ in $\dom\frh_{\tau,\gg,n}^{\cB^\ext}$ be the Cauchy sequence with respect to the norm $\|\cdot\|_n$ defined by
	\[
		\|f\|^2_n := \frh_{\tau,\gg,n}^{\cB^\ext}[f]+(1-c)
		\|f\|^2.
	\]
	Hence, the sequence
	\[
		u_m(r,\tt) := \frac{e^{\ii n\tt}f_m(r)}{\sqrt{2\pi}}\in H^2(\cB^\ext),\qquad m\in\dN,
	\]
	is a Cauchy sequence with respect to the norm defined by 
	\[
		\|u\|^2_{\frh_{\tau,\gg}^{\cB^\ext}} := \frh_{\tau,\gg}^{\cB^\ext}[u] + (1-c)\|u\|^2_{L^2(\cB^\ext)}.
	\]
	Since the quadratic form $\frh_{\tau,\gg}^{\cB^\ext}$ is closed by Proposition~\ref{prop:form} we conclude that there exists a function $u\in H^2(\cB^\ext)$ such that $\|u_m-u\|_{\frh_{\tau,\gg}^{\cB^\ext}}\arr 0$ as $m\arr\infty$.
	By closedness of $\ran\Pi_n$ we infer that there exists $f\in L^2((R,\infty);r\dd r)$ such that
	\[
	u(r,\tt) = \frac{e^{\ii n\tt}f(r)}{\sqrt{2\pi}}.
	\]
	It follows from $u\in H^2(\cB^\ext)$ that $f\in \dom\frh_{\tau,\gg,n}^{\cB^\ext}$. Moreover, we immediately get that $\|f_m-f\|_n\arr 0$ as $m\arr \infty$. Thus, by~\cite[Dfn. 10.2]{S12} the quadratic form $\frh_{\tau,\gg,n}^{\cB^\ext}$ is closed.
\end{proof}
Now we are in position to define the fiber operators.
\begin{dfn}\label{def:op_fiber}
	For $n\in\dZ$, the fiber operator $\sfH_{\tau,\gg,n}^{\cB^\ext}$ acting in the Hilbert space $L^2((R,\infty);r\dd r)$ is defined as the unique self-adjoint associated via the first representation theorem with the quadratic form $\frh_{\tau,\gg,n}^{\cB^\ext}$ given in~\eqref{eq:form_fiber}.
\end{dfn}
With all the above preparations, we provide in the next proposition a decomposition of the perturbed Robin bi-Laplacian $\sfH_{\tau,\gg}^{\cB^\ext}$ acting in the exterior of the disk into an orthogonal sum of the fiber operators.
\begin{prop}\label{prop:orth}
	Let the self-adjoint perturbed Robin bi-Laplacian $\sfH_{\tau,\gg}^{\cB^\ext}$ in the Hilbert space $L^2(\cB^\ext)$ be as in Definition~\ref{def:op}. Let the self-adjoint fiber operators $\sfH_{\tau,\gg,n}^{\cB^\ext}$, $n\in\dZ$,
	in the Hilbert space $L^2((R,\infty);r\dd r)$ be as in Definition~\ref{def:op_fiber}. Let the unitary operators $\{\sfU_n\}_{n\in\dZ}$ be as in~\eqref{eq:Un}. Then the following decomposition
	\begin{equation}\label{eq:decomp}	
		\sfH_{\tau,\gg}^{\cB^\ext} = \bigoplus_{n\in\dZ}\sfU_n^{-1}\sfH_{\tau,\gg,n}^{\cB^\ext}\sfU_n
	\end{equation}
	holds with respect to the orthogonal decomposition~\eqref{eq:ortho}.
\end{prop}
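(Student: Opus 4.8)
The plan is to verify the standard criterion for the decomposition of a self-adjoint operator along an orthogonal family of reducing subspaces, at the level of the associated quadratic forms. Concretely, I would show three things: (1) each subspace $\ran\Pi_n$ reduces $\sfH_{\tau,\gg}^{\cB^\ext}$; (2) the part of $\sfH_{\tau,\gg}^{\cB^\ext}$ in $\ran\Pi_n$ is, after the unitary identification via $\sfU_n$, exactly $\sfH_{\tau,\gg,n}^{\cB^\ext}$; and (3) these pieces exhaust the whole operator. Since both $\sfH_{\tau,\gg}^{\cB^\ext}$ and the fiber operators are defined via the first representation theorem from their forms, the cleanest route is to work with the forms and invoke uniqueness in that theorem.

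First I would record that $\frh_{\tau,\gg}^{\cB^\ext}$ is \emph{compatible} with the decomposition~\eqref{eq:ortho}: for $u\in\ran\Pi_n\cap H^2(\cB^\ext)$ and $v\in\ran\Pi_m\cap H^2(\cB^\ext)$ with $n\ne m$ one has $\frh_{\tau,\gg}^{\cB^\ext}[u,v]=0$ by~\eqref{eq:ortho_form}, and moreover $\Pi_n$ maps $\dom\frh_{\tau,\gg}^{\cB^\ext}=H^2(\cB^\ext)$ into itself — this last point needs a short argument, e.g.\ that $\Pi_n$ commutes with the rotation-invariant differential expressions defining the $H^2$-norm~\eqref{eq:norm1}, or equivalently that for $u\in H^2(\cB^\ext)$ the Fourier coefficient $(\Pi_n u)(r,\tt)=\tfrac{e^{\ii n\tt}}{\sqrt{2\pi}}(\sfU_n u)(r)$ again lies in $H^2$, which follows from Parseval applied to each of the four terms in~\eqref{eq:norm1}. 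Granting this, for any $u\in H^2(\cB^\ext)$ the series $\sum_n \Pi_n u$ converges to $u$ in the form norm, the cross terms vanish, and
\[
	\frh_{\tau,\gg}^{\cB^\ext}[u] = \sum_{n\in\dZ}\frh_{\tau,\gg}^{\cB^\ext}[\Pi_n u] = \sum_{n\in\dZ}\frh_{\tau,\gg,n}^{\cB^\ext}[\sfU_n u],
\]
where the last equality is the definition~\eqref{eq:form_fiber} of the fiber form together with $(\sfU_n\Pi_n u)=\sfU_n u$ on $\ran\Pi_n$. This identifies $\frh_{\tau,\gg}^{\cB^\ext}$ with the direct sum of forms $\bigoplus_{n\in\dZ}\sfU_n^{-1}\frh_{\tau,\gg,n}^{\cB^\ext}\sfU_n$, both as quadratic forms and, by polarization, on their domains.

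It then remains to pass from the equality of forms to the equality of operators~\eqref{eq:decomp}. The direct sum $\sfA:=\bigoplus_{n}\sfU_n^{-1}\sfH_{\tau,\gg,n}^{\cB^\ext}\sfU_n$ is self-adjoint on its natural domain, is lower semi-bounded (uniformly in $n$, using the bound $c$ from the previous lemma which is $n$-independent), and the quadratic form generated by $\sfA$ is precisely the direct sum of the fiber forms, i.e.\ equals $\frh_{\tau,\gg}^{\cB^\ext}$ under the identification~\eqref{eq:ortho}. By the uniqueness part of the first representation theorem~\cite[Thm. VI 2.1]{Kato}, the self-adjoint operator associated with a given closed semi-bounded form is unique, so $\sfA=\sfH_{\tau,\gg}^{\cB^\ext}$, which is~\eqref{eq:decomp}.

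The main obstacle I anticipate is not the operator-theoretic bookkeeping but the claim that $\Pi_n$ preserves $H^2(\cB^\ext)$ and that the $H^2$-norm decomposes orthogonally along the Fourier modes — i.e.\ the interplay between the Cartesian-coordinate definition~\eqref{eq:norm1} of the Sobolev norm and the polar-coordinate Fourier projection. Formulas~\eqref{eq:p1p2} show that $\p_1,\p_2$ mix a given mode $e^{\ii n\tt}$ only with $e^{\ii (n\pm 1)\tt}$ in a way that is still block-diagonal after one more derivative is applied, and the computations in Appendix~\ref{app:aux} already carry out exactly the needed mode-by-mode evaluation; so in the write-up I would simply cite that appendix for the orthogonality~\eqref{eq:ortho_form} and for the fiber representation~\eqref{eq:form_fiber2}, and reduce the $H^2$-invariance of $\Pi_n$ to Parseval term by term.
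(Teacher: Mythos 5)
Your proposal is correct and runs along the same lines as the paper's proof: both arguments rest on the orthogonality identities~\eqref{eq:ortho_form} from Appendix~\ref{app:aux}, the identification~\eqref{eq:form_fiber}--\eqref{eq:form_fiber2} of the fiber forms, and the first representation theorem. The one genuine difference is the finishing move. The paper takes a generic element $u=\sum_n\sfU_n^{-1}f_n$ of the domain of the direct-sum \emph{operator}, shows $u\in H^2(\cB^\ext)$ via the closedness estimate~\eqref{eq:estimate_norm}, verifies $\frh_{\tau,\gg}^{\cB^\ext}[u,v]=\big(\big(\oplus_n\sfU_n^{-1}\sfH_{\tau,\gg,n}^{\cB^\ext}\sfU_n\big)u,v\big)_{L^2(\cB^\ext)}$ for $v$ in the form core $C^\infty_0(\ov{\cB^\ext})$, and concludes that the direct sum is a self-adjoint operator extended by $\sfH_{\tau,\gg}^{\cB^\ext}$, hence equal to it. You instead establish equality of the closed \emph{forms} and invoke uniqueness in the representation theorem; this is equivalent in substance and arguably cleaner, but two points need care in the write-up. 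First, equality of the form domains requires the reverse inclusion: if $\Pi_nu\in H^2(\cB^\ext)$ for every $n$ and the fiber form norms are summable, then $u\in H^2(\cB^\ext)$; this follows because the form norm is equivalent to the $H^2$-norm by closedness (precisely the paper's inequality~\eqref{eq:estimate_norm}), so the partial sums $\sum_{|n|\le N}\Pi_nu$ are Cauchy in $H^2(\cB^\ext)$. Second, ``Parseval applied to each of the four terms in~\eqref{eq:norm1}'' is not literally available: since $\p_1,\p_2$ shift the Fourier mode by $\pm1$, the individual quantities $\|\nb\p_ju\|^2_{L^2(\cB^\ext;\dC^2)}$ have non-vanishing cross terms between modes $n$ and $n\pm2$, and only the sum over $j\in\{1,2\}$ diagonalizes — this is exactly what the combined identities~\eqref{eq:aux1}--\eqref{eq:aux2} establish, so deferring to the appendix (or, alternatively, writing $\Pi_n$ as an average of rotations and using rotation invariance of the $H^2$-norm) closes this point.
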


\begin{proof}
Let the sequence of functions $f_n\in\dom\sfH_{\tau,\gg,n}^{\cB^\ext}$, $n\in\dZ$, be such that the following condition
\begin{equation}\label{eq:cond_fn}
	\sum_{n\in\dZ} \big(\|\sfH_{\tau,\gg,n}^{\cB^\ext}f_n\|^2 + \|f_n\|^2\big) < \infty
\end{equation}
holds. This is equivalent to the fact that $\oplus_{n\in\dZ}f_n$ is a generic element of $\dom(\oplus_{n\in\dZ}\sfH_{\tau,\gg,n}^{\cB^\ext})$.

Consider the function $u = \sum_{n\in\dZ}\sfU_n^{-1}f_n$,
which in view of condition~\eqref{eq:cond_fn} is well defined and belongs to $L^2(\cB^\ext)$.
 Since $\dom\sfH_{\tau,\gg,n}^{\cB^\ext}\subset \dom\frh_{\tau,\gg,n}^{\cB^\ext}$ it follows from the equivalence of the characterisations for the domains of $\frh_{\tau,\gg,n}^{\cB^\ext}$ in~\eqref{eq:form_fiber} and in~\eqref{eq:form_fiber2} that $\sfU_n^{-1}f_n\in H^2(\cB^\ext)$ for all $n\in\dZ$. 

Let $c\le 0$ be such that
$\frh_{\tau,\gg}^{\cB^\ext}[u] \ge c\|u\|^2_{L^2(\cB^\ext)}$ for all $u\in H^2(\cB^\ext)$. Such a constant $c\le0$ exists thanks to lower-semiboundedness of the quadratic form $\frh_{\tau,\gg}^{\cB^\ext}$ shown in Proposition~\ref{prop:form}. Since the quadratic form $\frh_{\tau,\gg}^{\cB^\ext}$ is also closed by Proposition~\ref{prop:form} there exists a constant $A >0$ such that
\begin{equation}\label{eq:estimate_norm}
	\|u\|^2_{H^2(\cB^\ext)} \le A\big(
	\frh_{\tau,\gg}^{\cB^\ext}[u]+ (1-c)\|u\|^2_{L^2(\cB^\ext)}\big),
\end{equation}
where we use the expression~\eqref{eq:norm1} for the norm in the Sobolev space $H^2(\cB^\ext)$.
Further, we obtain 
\[	
\begin{aligned}
	\|u\|^2_{H^2(\cB^\ext)} & = \sum_{n\in\dZ} \|\sfU_n^{-1}f_n\|^2_{H^2(\cB^\ext)} \le A\sum_{n\in\dZ} \big(\frh_{\tau,\gg}^{\cB^\ext}[\sfU_n^{-1}f_n]+(1-c)\|\sfU_n^{-1}f_n\|^2_{L^2(\cB^\ext)}\big)\\
	&=
	A\sum_{n\in\dZ} \Big((\sfH_{\tau,\gg,n}^{\cB^\ext}f_n,f_n)+(1-c)\|f_n\|^2\Big)\\
	&\le
	A\sum_{n\in\dZ} \Big(2\|\sfH_{\tau,\gg,n}^{\cB^\ext}f_n\|^2+
	(3-c)\|f_n\|^2\Big) <\infty,
\end{aligned} 
\]
where we used the orthogonality properties~\eqref{eq:ortho_form}
and the expression~\eqref{eq:norm1} for the norm in the Sobolev space $H^2(\cB^\ext)$ in the first step, the inequality~\eqref{eq:estimate_norm} in the second step,
the definition~\eqref{eq:form_fiber} and the first representation theorem in the third step, the Cauchy-Schwarz inequality in the fourth step, and the condition~\eqref{eq:cond_fn} in the last step. From the last estimate we conclude that $u\in H^2(\cB^\ext)$.

By~\cite[p. 77]{McL} the space $C^\infty_0(\ov{\cB^\ext})$ is dense in $H^2(\cB^\ext)$ and thus is a core for the quadratic form $\frh_{\tau,\gg}^{\cB^\ext}$.
Let $v\in C^\infty_0(\ov{\cB^\ext})$ be arbitrary. Hence, we find that $v = \sum_{n\in\dZ}\sfU_n^{-1}g_n$ for $g_n := \sfU_n\Pi_n v\in L^2((R,\infty);r\dd r)$. It also easily follows that $\sfU_n^{-1}g_n\in C^\infty_0(\ov{\cB^\ext})\subset H^2(\cB^\ext)$ for all $n\in\dZ$. In view of the equivalence between~\eqref{eq:form_fiber} and~\eqref{eq:form_fiber2} we get that $g_n\in \dom\frh_{\tau,\gg,n}^{\cB^\ext}$ for all $n\in\dZ$.
Finally, we obtain  with $u = \sum_{n\in\dZ}\sfU_n^{-1}f_n$ $v=\sum_{n\in\dZ}\sfU_n^{-1}g_n$ defined as above that
\[
	\begin{aligned}
	\frh_{\tau,\gg}^{\cB^\ext}[u,v] 
	&= 
	\sum_{n\in\dZ}\frh_{\tau,\gg,n}^{\cB^\ext}[f_n,g_n]\\
	& = \left(\left(\oplus_{n\in\dZ}\sfH_{\tau,\gg,n}^{\cB^\ext}\right)(\oplus_{n\in\dZ}f_n),\oplus_{n\in\dZ}g_n\right)_{\cH}
	= \left(\left(\oplus_{n\in\dZ}
	\sfU_n^{-1}\sfH_{\tau,\gg,n}^{\cB^\ext}\sfU_n\right) u,v\right)_{L^2(\cB^\ext)},
	\end{aligned}
\]
where we used the first orthogonality property in~\eqref{eq:ortho_form} and the definition of the form $\frh_{\tau,\gg,n}^{\cB^\ext}$ in the first step, the first representation theorem in the second step, and the definition of the unitary map $\sfU_n$ in the last step.
Hence, we conclude again using the first representation theorem that $u\in\dom\sfH_{\tau,\gg}^{\cB^\ext}$ and that $\sfH_{\tau,\gg}^{\cB^\ext}u = \big(\oplus_{n\in\dZ}\sfU_n^{-1}\sfH_{\tau,\gg,n}^{\cB^\ext}\sfU_n\big)u$. Since $u$ is a generic element in the domain of $\oplus_{n\in\dZ}\sfU_n^{-1}\sfH_{\tau,\gg,n}^{\cB^\ext}\sfU_n$  
we infer that
$\sfH_{\tau,\gg}^{\cB^\ext}$ is an extension of the operator $\oplus_{n\in\dZ}\sfU_n^{-1}\sfH_{\tau,\gg,n}^{\cB^\ext}\sfU_n$. However, since both
these operators are self-adjoint, they coincide and we have
\[	
	\sfH_{\tau,\gg}^{\cB^\ext} = \bigoplus_{n\in\dZ}\sfU_n^{-1}\sfH_{\tau,\gg,n}^{\cB^\ext}\sfU_n,
\]
by which the proof is complete.
\end{proof}
Recall that by Proposition~\ref{prop:ess} we have $\sess(\sfH_{\tau,\gg}^{\cB^\ext}) =[0,\infty)$ and by Proposition~\ref{prop:ev} the lowest spectral point of $\sfH_{\tau,\gg}^{\cB^\ext}$ for $\gg < 0$ is a negative discrete eigenvalue.
In the last theorem of this section we identify to which fiber corresponds the lowest eigenvalue of the perturbed Robin bi-Laplacian
$\sfH_{\tau,\gg}^{\cB^\ext}$ for $\gg < 0$. Our characterisation is only partial and depends on the value of $\tau$
and the radius of the disk. It is worth to mention that the fiber operators labelled by $n$ and $-n$ are  equal for any $n\in\dZ$.
\begin{thm}\label{thm:ef_disk}
	Let the operator $\sfH^{\cB^\ext}_{\tau,\gg}$ with $\gg < 0$ be as in Definition~\ref{def:op}. Then the following hold. 
	\begin{myenum}
		\item In the orthogonal decomposition~\eqref{eq:decomp} the lowest eigenvalue $\lm_1^{\tau,\gg}(\cB^\ext) < 0$
		of $\sfH^{\cB^\ext}_{\tau,\gg}$ does not correspond to the fibers labelled by $n\notin \{-1,0,1\}$.  
		\item Assume, in addition, that $\tau \ge \frac{1}{R^2}$. Then the lowest eigenvalue $\lm_1^{\tau,\gg}(\cB^\ext) < 0$ of $\sfH_{\tau,\gg}^{\cB^\ext}$ corresponds only to the fiber labelled by $ n= 0$. 
%		In particular, any eigenfunction associated to this eigenvalue is a radial function.
	\end{myenum}
\end{thm}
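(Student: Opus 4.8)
The plan is to run everything through the fiber decomposition of Proposition~\ref{prop:orth}. It gives $\s(\sfH_{\tau,\gg}^{\cB^\ext}) = \bigcup_{n\in\dZ}\s(\sfH_{\tau,\gg,n}^{\cB^\ext})$, and since $\lm_1^{\tau,\gg}(\cB^\ext) < 0$ lies strictly below $\sess(\sfH_{\tau,\gg}^{\cB^\ext}) = [0,\infty)$, the associated (finite-dimensional, nontrivial) eigenspace splits as the orthogonal sum over $n\in\dZ$ of the subspaces $\sfU_n^{-1}\ker\big(\sfH_{\tau,\gg,n}^{\cB^\ext} - \lm_1^{\tau,\gg}(\cB^\ext)\big)$. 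Thus ``$\lm_1^{\tau,\gg}(\cB^\ext)$ corresponds to the fiber $n$'' means precisely that this kernel is nontrivial, in which case $\inf\s(\sfH_{\tau,\gg,n}^{\cB^\ext}) = \lm_1^{\tau,\gg}(\cB^\ext)$. Since $\sfH_{\tau,\gg,n}^{\cB^\ext} = \sfH_{\tau,\gg,-n}^{\cB^\ext}$, it suffices to prove, for every $f\not\equiv0$ in the common form domain of~\eqref{eq:form_fiber2}, the strict form inequalities $\frh_{\tau,\gg,n}^{\cB^\ext}[f] > \frh_{\tau,\gg,1}^{\cB^\ext}[f]$ for all $n\ge2$ (for (i)), and, under $\tau\ge\frac1{R^2}$, $\frh_{\tau,\gg,1}^{\cB^\ext}[f] > \frh_{\tau,\gg,0}^{\cB^\ext}[f]$ (for (ii)): if $f$ were an eigenfunction of $\sfH_{\tau,\gg,n}^{\cB^\ext}$ at $\lm_1^{\tau,\gg}(\cB^\ext)$ with $n\ge2$, then $\lm_1^{\tau,\gg}(\cB^\ext)\|f\|^2 = \frh_{\tau,\gg,n}^{\cB^\ext}[f] > \frh_{\tau,\gg,1}^{\cB^\ext}[f] \ge \inf\s(\sfH_{\tau,\gg,1}^{\cB^\ext})\,\|f\|^2 \ge \lm_1^{\tau,\gg}(\cB^\ext)\|f\|^2$, a contradiction, and analogously for the pair $(1,0)$; since the eigenspace is nontrivial and at least one fiber must carry part of it, this forces $n\in\{-1,0,1\}$ in (i) and $n=0$ in (ii).

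For (i) I would subtract the two quadratic forms in~\eqref{eq:form_fiber2}: the biharmonic and tension terms $\int_R^\infty(|f''|^2+\tau|f'|^2)r\,\dd r$ and the boundary term $\gg R|f(R)|^2$ cancel, and after expanding the squares, collecting like terms, and completing the square in $f'$ one arrives at the pointwise identity
\[
	\frh_{\tau,\gg,n}^{\cB^\ext}[f] - \frh_{\tau,\gg,1}^{\cB^\ext}[f]
	= (n^2-1)\int_R^\infty\Big[\tau\,\frac{|f|^2}{r^2} + 2\,\Big|\frac{f'}{r} - \frac{3f}{2r^2}\Big|^2 + \Big(n^2-\tfrac32\Big)\frac{|f|^2}{r^4}\Big]\,r\,\dd r .
\]
For $n\ge2$ we have $n^2-1>0$, $n^2-\tfrac32>0$ and $\tau\ge0$, so the right-hand side is a sum of nonnegative terms, strictly positive unless $f\equiv0$ (already the last term forces this). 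No integration by parts is involved, so the identity holds on the whole form domain, and (i) follows from the logic above.

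For (ii) I would compute $\frh_{\tau,\gg,1}^{\cB^\ext}[f] - \frh_{\tau,\gg,0}^{\cB^\ext}[f]$ similarly; here one integration by parts, using $2\Re(f'\ov f)=(|f|^2)'$ and the decay $|f(r)|^2/r^2\to0$ as $r\to\infty$ (valid for $f$ in the form domain, and anyway needed only on the core obtained by projecting $C_0^\infty(\ov{\cB^\ext})$ into the fiber), yields
\[
	\frh_{\tau,\gg,1}^{\cB^\ext}[f] - \frh_{\tau,\gg,0}^{\cB^\ext}[f]
	= \tau\int_R^\infty\frac{|f|^2}{r}\,\dd r + 2\int_R^\infty\frac{|f'|^2}{r}\,\dd r + \frac{3|f(R)|^2}{R^2} - 3\int_R^\infty\frac{|f|^2}{r^3}\,\dd r .
\]
The decisive ingredient is the weighted Hardy inequality $\int_R^\infty\frac{|f|^2}{r^3}\,\dd r \le \int_R^\infty\frac{|f'|^2}{r}\,\dd r + \frac{|f(R)|^2}{R^2}$, obtained by expanding $0\le\int_R^\infty r\,|(f/r)'|^2\,\dd r$ and integrating by parts once. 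Applying it to two of the three copies of $\int_R^\infty\frac{|f|^2}{r^3}\,\dd r$ and bounding the third by $\frac1{R^2}\int_R^\infty\frac{|f|^2}{r}\,\dd r$ (since $r\ge R$) gives
\[
	\frh_{\tau,\gg,1}^{\cB^\ext}[f] - \frh_{\tau,\gg,0}^{\cB^\ext}[f] \ge \Big(\tau - \frac1{R^2}\Big)\int_R^\infty\frac{|f|^2}{r}\,\dd r + \frac{|f(R)|^2}{R^2},
\]
which is $\ge0$ for $\tau\ge\frac1{R^2}$ and strictly positive for $f\not\equiv0$, because equality in the Hardy inequality would force $f(r)=cr\notin L^2((R,\infty);r\,\dd r)$ unless $c=0$. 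I expect the delicate point to be exactly this constant: estimating all three copies of $\int_R^\infty\frac{|f|^2}{r^3}\,\dd r$ by the crude bound $\le\frac1{R^2}\int_R^\infty\frac{|f|^2}{r}\,\dd r$ only gives $\tau\ge\frac3{R^2}$, and a naive use of Young's inequality to absorb the cross term gives $\tau\ge\frac9{8R^2}$; reaching the stated threshold $\frac1{R^2}$ requires genuinely exploiting the boundary term $\frac{3|f(R)|^2}{R^2}$ via the Hardy inequality above. The remaining tasks — the two form identities and the routine density argument that legitimizes the integration by parts on all of $\dom\frh_{\tau,\gg,n}^{\cB^\ext}$ — are standard.
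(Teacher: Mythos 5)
Your proposal is correct and follows essentially the same route as the paper: reduce to strict fiberwise comparisons of the forms \eqref{eq:form_fiber2} and conclude by the min-max principle, with the key computation being the same integration by parts via $2\Re(f'\ov{f})=(|f|^2)'$. The only cosmetic differences are that for (i) you compare fiber $n$ with fiber $1$ through a pointwise completed square (the paper compares with fiber $0$ after integrating by parts), and for (ii) your Hardy inequality $\int_R^\infty r\,|(f/r)'|^2\,\dd r\ge 0$ is exactly the nonnegativity of the term $2\bigl|\tfrac{f'}{r}-\tfrac{f}{r^2}\bigr|^2$ that the paper keeps explicitly, so both arguments reach the threshold $\tau\ge\frac{1}{R^2}$ by the same mechanism.
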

\begin{remark}
	In general, three cases are possible: 
	\begin{myenum}
	\item [{\rm (i)}] the lowest eigenvalue corresponds  only to the fiber labelled by $n = 0$;
	\item [{\rm (ii)}] the lowest eigenvalue corresponds both to the fibers labelled by $n = 0$ and $n =\pm1$;
	\item [{\rm(iii)}] the lowest eigenvalue corresponds only to the fibers labelled by $n =\pm 1$.
	\end{myenum}
	In the case (i) any eigenfunction corresponding to the lowest eigenvalue is radial. In the case (ii)
	there is a radial eigenfunction
	corresponding to the lowest eigenvalue
	and an eigenfunction admitting the representation $f(r)e^{\pm \ii\tt}\in H^2(\cB^\ext)$
	in polar coordinates with a
	non-trivial function $f\colon (R,\infty)\arr\dR$. 
	In the case (iii)
	any eigenfunction
	corresponding to the lowest eigenvalue
	admits the representation $f(r)e^{\pm \ii\tt}\in H^2(\cB^\ext)$
	in polar coordinates with a
	non-trivial function $f\colon (R,\infty)\arr\dR$. 
	It remains an open problem whether
	the cases (ii) and (iii) can
	occur for some values of the parameters $\tau,\gg$ and $R$.
\end{remark}
\begin{proof}[Proof of Theorem~\ref{thm:ef_disk}]
	Let $f\in\dom\frh_{\tau,\gg,n}^{\cB^\ext}$ be arbitrary. In particular, one has $f(r)e^{\ii n\tt}\in H^2(\cB^\ext)$. In view of the Sobolev embedding~\cite[Thm. 5.4, Case C]{Adams} the function $f$ is bounded. Using the representation~\eqref{eq:form_fiber2} and performing the integration by parts we obtain
	\[
	\begin{aligned}
		\frh_{\tau,\gg,n}^{\cB^\ext}[f] &= 
		\int_R^\infty\big(|f''(r)|^2+
		\tau|f'(r)|^2\big)r\dd r\\
		&\quad+
		\int_R^\infty
		\left[\frac{\tau n^2|f(r)|^2}{r^2}+2n^2
		\left|\frac{f'(r)}{r}-\frac{f(r)}{r^2}\right|^2 +\left|\frac{f'(r)}{r}-\frac{n^2f(r)}{r^2}\right|^2\right]r\dd r+\gg R|f(R)|^2\\
		&=\int_R^\infty\big(|f''(r)|^2+
		\tau|f'(r)|^2\big)r\dd r\\
		&\qquad+
		\int_R^\infty
		\left[\frac{\tau n^2|f(r)|^2}{r^2}+2n^2
		\left|\frac{f'(r)}{r}-\frac{f(r)}{r^2}\right|^2  +\frac{|f'(r)|^2}{r^2}+
		\frac{n^4|f(r)|^2}{r^4}\right]r\dd r\\
		&\qquad\qquad-n^2\int_R^\infty \frac{2\Re(f'(r)\ov{f(r)})}{r^2} \dd r +\gg R|f(R)|^2\\
		&
		=
		\int_R^\infty\big(|f''(r)|^2+
		\tau|f'(r)|^2\big)r\dd r\\
		&\qquad
		+\int_R^\infty\left[
		2n^2
		\left|\frac{f'(r)}{r}-\frac{f(r)}{r^2}\right|^2 +\frac{|f'(r)|^2}{r^2}+
		\left(\frac{\tau n^2}{r^2}+\frac{n^4-2n^2}{r^4}\right)|f(r)|^2\right)r\dd r\\
		&\qquad\qquad
		+\left(\frac{n^2}{R^2}+\gg R\right)|f(R)|^2,
	\end{aligned}
	\]
	where we used that $2\Re(f'(r)\ov{f(r)}) = (|f(r)|^2)'$.
	From the above computation and the inequality $n^4-2n^2> 0$ for $|n|\ge 2$ it follows that for any $n\notin\{-1,0,1\}$ one has
	\[
		\frh_{\tau,\gg,n}^{\cB^\ext}[f] > \frh_{\tau,\gg,0}^{\cB^\ext}[f],\qquad \forall\, f\in\dom\frh_{\tau,\gg,0}^{\cB^\ext} = \dom\frh_{\tau,\gg,n}^{\cB^\ext}, f\ne 0.
	\]
	Hence, we conclude from the min-max principle that the lowest eigenvalue of $\sfH_{\tau,\gg}^{\cB^\ext}$ can not correspond to the fiber labelled by $n\notin \{-1,0,1\}$. Thus, we have shown the claim of~(i).
	
	Under the assumption $\tau \ge\frac{1}{R^2}$ the function
	\[
		(R,+\infty)\ni r\mapsto \frac{\tau}{r^2} -\frac{1}{r^4}
	\]
	is positive. Hence, it again follows from the above computation for $\frh_{\tau,\gg,n}^{\cB^\ext}[f]$ that
	\[
		\frh_{\tau,\gg,\pm1}^{\cB^\ext}[f] > \frh_{\tau,\gg,0}^{\cB^\ext}[f],\qquad \forall f\in\dom\frh_{\tau,\gg,0}^{\cB^\ext}=\frh_{\tau,\gg,\pm1}^{\cB^\ext}, f\ne0.
	\]
	Applying again the min-max principle we conclude that under the assumption $\tau\ge\frac{1}{R^2}$ the lowest eigenvalue of $\sfH_{\tau,\gg}^{\cB^\ext}$ corresponds only to the fiber labelled by $n = 0$. Thus, we have shown the claim of (ii).
\end{proof}

\section{Proof of Theorem~\ref{thm:main}}
\label{sec:proof}
\noindent\emph{Step 1: introducing the parallel coordinates.}
Without loss of generality we assume that $\Omg$ is not congruent to the disk $\cB$.
We parametrize the boundary of the convex domain
$\Omg\subset\dR^2$ by the unit-speed mapping
$\s\colon[0,L]\arr\dR^2$ (\ie $|\s'| \equiv 1$)
in the clockwise direction,
where $L$ is the length of $\p\Omg$. The unit tangential vector to the boundary is defined by $\tau(s):=\s'(s) = (\tau_1(s),\tau_2(s))^\top$.
The outer unit normal vector is given by 
$\nu(s) = (\nu_1(s),\nu_2(s))^\top:= (-\tau_2(s),\tau_1(s))^\top$. Thanks to convexity of $\Omg$ the mapping
\[
	\cL\colon [0,L]\times\dR_+\arr\Omg^\ext,\qquad
	\cL(s,t) := \s(s)+t\nu(s),
\]
is a bijection and it defines parallel coordinates
$(s,t)$ on $\Omg^\ext$; \cf~\cite[Sec. 4]{KL18}. Let us recall the Frenet formulas (see \eg~\cite[Sec. 1.4]{Kl})
\begin{equation}\label{eq:Frenet}
	\tau'(s) = -\kp(s)\nu(s),\qquad \nu'(s) = \kp(s)\tau(s),
\end{equation}
where $\kp\colon[0,L]\arr[0,\infty)$ 
is the curvature of $\p\Omg$. Note also
that $C^2$-smoothness of $\p\Omg$ yields
that the curvature of $\p\Omg$ is a continuous function on $[0,L]$ and $\kp(0) = \kp(L)$. Using the Frenet formulas we find that the Jacobian $J_\cL$ of the mapping $\cL$ is given by
\[
	J_\cL(s,t) = \begin{pmatrix} \tau_1(s)\big(1+t\kp(s)\big) & -\tau_2(s)\\
	\tau_2(s)\big(1+t\kp(s)\big) & \tau_1(s)
	\end{pmatrix}.
\]
Hence, the Jacobian determinant can be computed as
\[
	\det J_\cL(s,t) = \big(\tau_1^2(s) + \tau_2^2(s)\big)\big(1+t\kp(s)\big) = 1+t\kp(s).
\]
It remains to express the partial derivatives $\p_1$ and $\p_2$ in terms of $\p_s$ and $\p_t$. This transform is standard and we provide it only for convenience of the reader.
Let $x  = \cL(s,t)$. Using the chain rule for the differentiation and the Frenet formulas we obtain for any $u \in H^2(\Omg^\ext)$
\[
\begin{aligned}
	\big(\p_s[ u\circ\cL]\big)(s,t) & = \p_1 u(x)\tau_1(s)\big(1+\kp(s)t\big) + \p_2 u(x)\tau_2(s)\big(1+\kp(s)t\big),\\
	\big(\p_t[u\circ\cL]\big)(s,t) & = \p_1 u(x)\nu_1(s) + \p_2 u(x)\nu_2(s).  
\end{aligned}
\]
The above system can be rewritten as
\[
\begin{aligned}
\frac{\big(\p_s[ u\circ\cL]\big)(s,t)}{1+\kp(s)t} & = \p_1 u(x)\tau_1(s) + \p_2 u(x)\tau_2(s),\\
\big(\p_t[u\circ\cL]\big)(s,t) & = \p_1 u(x)\nu_1(s) + \p_2 u(x)\nu_2(s).  
\end{aligned}
\]
Solving this linear system and simplifying the notation ($\p_s u = \p_s (u\circ\cL)$, $\p_t u = \p_t(u\circ\cL)$) we find
\[
\begin{aligned}
	\p_j u &= \tau_j(s)\frac{\p_s u}{1+\kp(s)t}+\nu_j(s)\p_tu,\qquad j=1,2.
\end{aligned}	
\]
In particular, the gradient can be expressed in $(s,t)$-coordinates as
\begin{equation}\label{eq:gradient_st}
	\nb = \frac{\tau(s)}{1+\kp(s)t}\p_s + \nu(s)\p_t.
\end{equation}
\noindent\emph{Step 2: test function argument.}
Recall that we assumed that the radius $R > 0$, the boundary parameter $\gg <0$ and the tension parameter $\tau\ge0$ are such that to the lowest eigenvalue of $\sfH^{\cB^\ext}_{\tau,\gg}$ corresponds a radial (real-valued) eigenfunction
$u_\circ\in H^2(\cB^\ext)$. Recall also that by Theorem~\ref{thm:ef_disk}\,(ii) this property holds, in particular, under the assumption $\tau\ge\frac{1}{R^2}$. In view of the decomposition in Proposition~\ref{prop:orth} there exists a non-trivial function $f_\circ\colon(0,\infty)\arr\dR$ such that $f_\circ,f_\circ',f_\circ''\in L^2((0,\infty);(r+R)\dd r)$ and that $u_\circ(r,\tt) = f_\circ(r-R)$. It follows from the min-max principle that
\begin{equation}\label{eq:char_ev_disk}
	\lm_1^{\tau,\gg}(\cB^\ext) = 
	\frac{\displaystyle\int_0^\infty\left(|f''_\circ(t)|^2
		+\tau|f'_\circ(t)|^2
		+\frac{|f'_\circ(t)|^2}{(t+R)^2}  \right)(t+R)\dd t +\gg R|f_\circ(0)|^2}{\displaystyle\int_0^\infty
		|f_\circ(t)|^2(t+R)\dd t}.
\end{equation}
Let $L_\circ$ be the length of the circle $\p\cB$.
Using the total curvature identity
$\int_0^L\kp(s)\dd s = 2\pi$
combined with the inequality $\kp \le\frac{1}{R}$ (strict on a set of positive measure) we find that 
\[
	L_\circ = 2\pi R = R\int_0^L \kp(s)\dd s < L.
\]

Let us introduce the test function on $\Omg^\ext$ (in the coordinates $(s,t)$ defined in Step 1) by the formula 
\[
	u_\star(s,t) := f_\circ(t).
\] 
Using the expression for the gradient in~\eqref{eq:gradient_st}, applying the Frenet formulas and employing the total curvature identity we get
\begin{equation}\label{eq:ustar1}
\begin{aligned}
	&\int_{\Omg^\ext}\left(
	|\nb\p_1 u_\star|^2+	|\nb\p_2 u_\star|^2
	\right)\dd x=\\
	&\qquad  =
	\int_0^\infty\int_0^L
	\left(
	|\nb(\nu_1(s)f_\circ'(t))|^2 
	+
	|\nb(\nu_2(s)f_\circ'(t))|^2 
	\right)(1+\kp(s)t)\dd s\dd t \\
	&\qquad=
	\int_0^\infty\int_0^L
	\left(|f_\circ''(t)|^2 + \frac{\kp^2(s)|f_\circ'(t)|^2}{(1+\kp(s)t)^2}\right)(1+\kp(s)t)\dd s\dd t\\
	&\qquad =
	\int_0^\infty|f_\circ''(t)|^2(L+2\pi t)\dd t + \int_0^\infty\int_0^L \frac{\kp^2(s)}{1+\kp(s)t}|f_\circ'(t)|^2\dd s \dd t\\
	&\qquad < 
	\int_0^\infty|f_\circ''(t)|^2(L+2\pi t)\dd t + L\int_0^\infty \frac{|f_\circ'(t)|^2}{R(R+t)}\dd t <\infty,
\end{aligned}\end{equation}
where we used in the penultimate step $\kp(s) \le \frac{1}{R}$ (the inequality being strict on a set of positive measure) and that the function $x\mapsto\frac{x^2}{1+tx}$ is strictly increasing on $(0,\infty)$.
Analogously we find that
\begin{equation}\label{eq:ustar2}
\begin{aligned}
	\int_{\Omg^\ext} |\nb u_\star|^2\dd x &= 
	\int_0^\infty\int_0^L |f_\circ'(t)|^2(1+\kp(s)t)\dd s\dd t=
	\int_0^\infty |f_\circ'(t)|^2(L+2\pi t)\dd t < \infty,\\
	\int_{\Omg^\ext} |u_\star|^2\dd x& = 
	\int_0^\infty\int_0^L |f_\circ(t)|^2(1+\kp(s)t)\dd s\dd t =
	\int_0^\infty |f_\circ(t)|^2(L+2\pi t)\dd t < \infty.
\end{aligned}
\end{equation}
As a consequence of~\eqref{eq:ustar1} and~\eqref{eq:ustar2} we get that $u_\star\in H^2(\Omg^\ext) = \dom\frh_{\tau,\gg}^{\Omg^\ext}$ and applying the min-max principle we arrive at the bound
\[
\begin{aligned}
	\lm_1^{\tau,\gg}(\Omg^\ext)
	& \le \frac{\frh_{\tau,\gg}^{\Omg^\ext}[u_\star]}{\|u_\star\|^2_{L^2(\Omg^\ext)}}\\
	&< \frac{\displaystyle\int_0^\infty
		\left[\left(|f_\circ''(t)|^2+\tau|f'_\circ(t)|^2\right)\left(R+\frac{2\pi R t}{L}\right)+
		\frac{|f_\circ'(t)|^2}{R+t}\right]\dd t+\gg R |f_\circ(0)|^2 }{\displaystyle\int_0^\infty |f_\circ(t)|^2\left(R +\frac{2\pi Rt}{L}\right)\dd t}\\
	&< 
	\frac{\displaystyle\int_0^\infty
		\left[\left(|f_\circ''(t)|^2+\tau|f'_\circ(t)|^2\right)\left(R+\frac{2\pi R t}{L_\circ}\right)+
		\frac{|f_\circ'(t)|^2}{R+t}\right]\dd t+\gg R |f_\circ(0)|^2 }{\displaystyle\int_0^\infty |f_\circ(t)|^2\left(R +\frac{2\pi Rt}{L_\circ}\right)\dd t} \\
	& =
	\frac{\displaystyle\int_0^\infty\left(|f''_\circ(t)|^2
		+\tau|f'_\circ(t)|^2
		+\frac{|f'_\circ(t)|^2}{(t+R)^2}  \right)(t+R)\dd t +\gg R|f_\circ(0)|^2}{\displaystyle\int_0^\infty
		|f_\circ(t)|^2(t+R)\dd t} = \lm_1^{\tau,\gg}(\cB^\ext),
\end{aligned}
\]
where we used in between that $L_\circ < L$ and that $\lm_1^{\tau,\gg}(\cB^\ext) < 0$ and employed the characterisation~\eqref{eq:char_ev_disk} in the last step.
\section*{Acknowledgements}
The author was supported by the grant No.~21-07129S 
of the Czech Science Foundation.
\begin{appendix}
\section{Boundary conditions for the domain of $\sfH_{\tau,\gg}^{\Omg^\ext}$}\label{app:BC}
In this appendix we provide a derivation of the boundary condition for the operator domain of $\sfH_{\tau,\gg}^{\Omg^\ext}$. We restrict the analysis to functions in $\dom\sfH_{\tau,\gg}^{\Omg^\ext}$ which are simultaneously smooth up to the boundary. This boundary condition is reminiscent of the one obtained for a bounded domain in~\cite{C11, CL20}. We provide here details for convenience of the reader. It should be emphasized that in this appendix we do not assume that $\Omg$ is convex.

Let $\s\colon[0,L]\arr\dR^2$ be the arc-length clockwise parametrization of $\p\Omg$ (\ie $|\dot\s| \equiv 1$). The unit normal vector at the point $\s(s)$ pointing outwards of $\Omg$ is denoted by $\nu(s)$
and the respective tangential vector is denoted by $\tau(s)$ as in Step 1 of the proof of Theorem~\ref{thm:main}. We also denote by $\kp$ the curvature of $\p\Omg$ with the convention that the curvature is non-negative if $\Omg$ is convex. Note that the curvature is sign-changing for non-convex $\Omg$.
The Frenet formulas~\eqref{eq:Frenet} are clearly valid without the convexity assumption.

Recall that by ~\cite[Thm. 5.25]{Lee} (see also~\cite[App. B]{BEHL17}) the mapping 
\begin{equation}\label{eq:mapping2}
	[0,L]\times (0,\eps)\ni(s,t)\mapsto \s(s)+t\nu(s)
\end{equation}
is injective for $\eps \in (0, (\|\kp\|_{L^\infty})^{-1})$ sufficiently small. Thus, it defines coordinates $(s,t)$ in the neighbourhood of the boundary. With a slight abuse of notation for a function $u\colon\Omg^\ext\arr\dC$ we use the abbreviation $u(s,t) = u(\s(s)+t\nu(s))$ for $s\in[0,L]$ and $t\in(0,
\eps)$. Hence, in the neighbourhood $
\Omg_\eps^\ext := \{x\in\Omg^\ext\colon \dist(x,\p\Omg) < \eps\}$  of $\p\Omg$ the partial derivatives $\p_s$ and $\p_t$ are well defined. Mimicking the computations in Step 1 of the proof of Theorem~\ref{thm:main} we get the same expression for the gradient
\begin{equation}\label{eq:gradst}
	\nb = \frac{\tau(s)}{1+\kp(s) t}\p_s + \nu(s)\p_t,
\end{equation}
which is valid on $\Omg^\ext_\eps$.
\begin{prop}\label{prop:BC}
	Let the operator $\sfH_{\tau,\gg}^{\Omg^\ext}$ be as in Definition~\ref{def:op}. Then
	\[
	\begin{aligned}
		&C^\infty_0(\ov{\Omg^\ext})\cap
		\dom\sfH_{\tau,\gg}^{\Omg^\ext} = 
		\{u\in C^\infty_0(\ov{\Omg^\ext})\colon
		(\p_{tt} u)(s,0) = 0,\\ 
		&\qquad\qquad\qquad
		\Big[\p_t(\Delta u) +
		\p_s\big[\p_{st}u-\kp\p_s u\big] - \tau\p_t u+ \gamma u\Big](s,0) = 0,\, s\in[0,L]\Big\},  
	\end{aligned}
	\]
	where $\Delta u$ is computed in the Cartesian coordinates $(x_1,x_2)$.
	Moreover, for any $u\in C^\infty_0(\ov{\Omg^\ext})\cap\dom\sfH_{\tau,\gg}^{\Omg^\ext}$ holds $\sfH_{\tau,\gg}^{\Omg^\ext} = \Delta^2 u - \tau\Delta u$.
\end{prop}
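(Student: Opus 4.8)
The plan is to read everything off the defining property of the operator: by the first representation theorem~\cite[Thm. VI 2.1]{Kato}, a function $u\in C^\infty_0(\ov{\Omg^\ext})\subset H^2(\Omg^\ext)=\dom\frh_{\tau,\gg}^{\Omg^\ext}$ lies in $\dom\sfH_{\tau,\gg}^{\Omg^\ext}$ if and only if the antilinear functional $v\mapsto\frh_{\tau,\gg}^{\Omg^\ext}[u,v]$ is represented on all of $H^2(\Omg^\ext)$ by $(w,\cdot)_{L^2(\Omg^\ext)}$ for some $w\in L^2(\Omg^\ext)$, and then $\sfH_{\tau,\gg}^{\Omg^\ext}u=w$. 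So the whole proof reduces to integrating by parts in $\frh_{\tau,\gg}^{\Omg^\ext}[u,v]$ until a volume term of the form $(\Delta^2u-\tau\Delta u,v)_{L^2(\Omg^\ext)}$ is isolated, plus boundary integrals, and then determining which conditions on $u$ force those boundary integrals to vanish for every $v\in H^2(\Omg^\ext)$.

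First I would carry out the integrations by parts on $\Omg^\ext$, which are legitimate because $u$ is smooth with compact support up to the boundary, so there is no contribution at infinity; one must only keep in mind that the outward unit normal of $\Omg^\ext$ along $\p\Omg$ is $-\nu$. Applying Green's identity twice to each term $\int_{\Omg^\ext}\nb\p_j u\cdot\ov{\nb\p_j v}\,\dd x$ and once to $\tau\int_{\Omg^\ext}\nb u\cdot\ov{\nb v}\,\dd x$ yields
\[
\frh_{\tau,\gg}^{\Omg^\ext}[u,v]=\int_{\Omg^\ext}\big(\Delta^2u-\tau\Delta u\big)\ov v\,\dd x+\int_{\p\Omg}\big[\p_\nu(\Delta u)-\tau\,\p_\nu u+\gg u\big]\ov v\,\dd\s-\sum_{j=1}^2\int_{\p\Omg}(\p_\nu\p_j u)\,\ov{\p_j v}\,\dd\s.
\]
The last boundary term is the one that carries the essential information. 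Decomposing $\p_j v|_{\p\Omg}=\nu_j\,\p_\nu v+\tau_j\,\p_s v$ into its normal and tangential parts and using the Frenet formulas~\eqref{eq:Frenet} together with the expression~\eqref{eq:gradst} for $\nb$ in the coordinates $(s,t)$ valid on $\Omg^\ext_\eps$, one checks that $\sum_{j,k}\nu_j\nu_k\p_j\p_k u|_{\p\Omg}=(\p_{tt}u)(s,0)$ and, crucially, that $\sum_{j,k}\tau_j\nu_k\p_j\p_k u|_{\p\Omg}=(\p_{st}u-\kp\,\p_s u)(s,0)$ — this is where the curvature term in the statement enters. An integration by parts along the closed curve $\p\Omg$ (with no endpoint contribution, since $\kp$ and all relevant traces are periodic by $C^2$-smoothness and $\kp(0)=\kp(L)$) then moves $\p_s$ off $v$, and after writing $\p_\nu=\p_t$ at $t=0$ one arrives at
\[
\frh_{\tau,\gg}^{\Omg^\ext}[u,v]=\int_{\Omg^\ext}\big(\Delta^2u-\tau\Delta u\big)\ov v\,\dd x+\int_{\p\Omg}B_0\,\ov v\,\dd\s-\int_{\p\Omg}B_1\,\ov{\p_t v}\,\dd\s,
\]
with $B_0=\big[\p_t(\Delta u)+\p_s(\p_{st}u-\kp\,\p_s u)-\tau\,\p_t u+\gg u\big](s,0)$ and $B_1=(\p_{tt}u)(s,0)$.

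To conclude, I would argue that $u\in\dom\sfH_{\tau,\gg}^{\Omg^\ext}$ if and only if $B_0\equiv B_1\equiv 0$, which gives simultaneously both the stated boundary conditions and the formula $\sfH_{\tau,\gg}^{\Omg^\ext}u=\Delta^2u-\tau\Delta u$. If both vanish, the volume term already provides the required representation with $w=\Delta^2u-\tau\Delta u\in L^2(\Omg^\ext)$. Conversely, if $\frh_{\tau,\gg}^{\Omg^\ext}[u,\cdot]=(w,\cdot)_{L^2(\Omg^\ext)}$, then testing against $v\in C^\infty_0(\Omg^\ext)$, for which the boundary terms drop out, and using the density of $C^\infty_0(\Omg^\ext)$ in $L^2(\Omg^\ext)$ forces $w=\Delta^2u-\tau\Delta u$; hence the remaining boundary functional $v\mapsto\int_{\p\Omg}B_0\ov v\,\dd\s-\int_{\p\Omg}B_1\ov{\p_t v}\,\dd\s$ vanishes on all of $H^2(\Omg^\ext)$. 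Since by the trace theorem~\cite[Thm. 3.37]{McL} the pair $(v|_{\p\Omg},\p_\nu v|_{\p\Omg})$ ranges over a set with independent components dense in $L^2(\p\Omg)\times L^2(\p\Omg)$, choosing $v$ with $\p_\nu v|_{\p\Omg}=0$ and then with $v|_{\p\Omg}=0$ yields, by continuity of $B_0$ and $B_1$ on $[0,L]$, first $B_0\equiv0$ and then $B_1\equiv 0$. I expect the one genuinely technical point to be the bookkeeping in the passage to the coordinates $(s,t)$ near $\p\Omg$ — in particular the verification of the identity $\sum_{j,k}\tau_j\nu_k\p_j\p_k u=\p_{st}u-\kp\,\p_s u$ on $\p\Omg$ from~\eqref{eq:gradst} and~\eqref{eq:Frenet}, and the check that the tangential integration by parts produces no endpoint terms; everything else is routine.
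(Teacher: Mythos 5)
Your proposal is correct and follows essentially the same route as the paper's own proof: integration by parts via Green's identities (with the sign bookkeeping for the outward normal of $\Omg^\ext$), passage to the $(s,t)$ coordinates near $\p\Omg$ to identify the normal--normal term $\p_{tt}u$ and the tangential--normal term $\p_{st}u-\kp\,\p_s u$, a tangential integration by parts along the closed curve, and finally the density of the range of the trace pair $v\mapsto(v|_{\p\Omg},\p_\nu v|_{\p\Omg})$ in $L^2(\p\Omg)\times L^2(\p\Omg)$ to separate the two boundary conditions. The only cosmetic difference is the reference used for that last density statement (the paper cites Grisvard rather than McLean).
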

\begin{proof}
	Let $u\in C^\infty_0(\ov{\Omg^\ext})\cap\dom\sfH_{\tau,\gg}^{\Omg^\ext}$. For any $\phi \in C^\infty_0(\Omg^\ext)$ we find via integration by parts 
	\[
		\frh_{\tau,\gg}^{\Omg^\ext}[u,\phi] = (\Delta^2 u-\tau\Delta u,\phi)_{L^2(\Omg^\ext)}
			 = (\sfH_{\tau,\gg}^{\Omg^\ext} u,\phi)_{L^2(\Omg^\ext)}.
	\]
	Hence, we conclude from density of $C^\infty_0(\Omg^\ext)$ in $L^2(\Omg^\ext)$ that $\sfH_{\tau,\gg}^{\Omg^\ext} u = \Delta^2 u-\tau\Delta u$.
	
	Let $u\in C^\infty_0(\ov{\Omg^\ext})$ and $v\in H^2(\Omg^\ext)$ be arbitrary. Taking into account that the support of $u$ is compact we
	can apply~\cite[Thm. 1.5.3.1]{Gr} 
	to integrate by parts
	\begin{equation}\label{eq:huv}
	\begin{aligned}
		\frh_{\tau,\gg}^{\Omg^\ext}[u,v] &=
		(\nb\p_1 u,\nb\p_1 v)_{L^2(\Omg^\ext;\dC^2)} 
		+		(\nb\p_2 u,\nb\p_2 v)_{L^2(\Omg^\ext;\dC^2)}\\
		&\qquad\qquad\qquad\qquad\qquad + \tau(\nb u,\nb v)_{L^2(\Omg^\ext;\dC^2)}+ \gg\int_{\p\Omg} u\ov{v}\dd \s \\
		&=
		(-\nb\Delta u,\nb v)_{L^2(\Omg^\ext;\dC^2)}
		-\tau(\Delta u, v)_{L^2(\Omg^\ext)}\\
		&\qquad\! -\!
		\int_{\p\Omg} \p_\nu(\p_1 u)\ov{\p_1 v}\dd\s
		-\!
		\int_{\p\Omg} \p_\nu(\p_2 u)\ov{\p_2 v}\dd\s
		-\!\tau\int_{\p\Omg} \p_\nu u\ov{v}\dd\s
		\!+\gamma\int_{\p\Omg}u\ov{v}\dd\s\\
		&=
		(\Delta^2 u-\tau\Delta u,v)_{L^2(\Omg^\ext)}\\
		&\qquad-
		\int_{\p\Omg} 
		\left[\p_\nu(\p_1 u)\ov{\p_1 v}\!+\!
		\p_\nu(\p_2 u)\ov{\p_2 v}\right]\dd\s
		\!+\!
		\int_{\p\Omg} 
		\left[\p_\nu(\Delta u)-\tau\p_\nu u+\gamma u\right]\ov{v}\dd\s,
		\end{aligned}
	\end{equation} 
	where the normal derivatives are computed with the unit normal pointing outwards of $\Omg$.
	In the coordinates $(s,t)$ we find
	for $j\in\{1,2\}$ using the expression~\eqref{eq:gradst} that
	\[
	\begin{aligned}
		\big(\p_\nu(\p_j u)\big)(s,0)& = \left(\p_t\left(
		\frac{\tau_j(s)\p_s u}{1+\kp(s)t} + \nu_j(s) \p_t u\right)\right)(s,0)\\
		&=
		\tau_j(s)\big(\p_{st} u\big)(s,0) - 
		\kp(s)\tau_j(s)\big(\p_s u\big)(s,0) +\nu_j(s)\big(\p_{tt}u\big)(s,0).
	\end{aligned}
	\]
	Moreover, we find for $j\in\{1,2\}$ that
	\[
		(\p_j v)(s,0) = \tau_j(s)\big(\p_s v\big)(s,0) + \nu_j(s)\big(\p_t v\big)(s,0).
	\]
	Hence, we obtain that
	\[
	\begin{aligned}
	&\big[\p_\nu(\p_1 u)\ov{\p_1 v}+
	\p_\nu(\p_2 u)\ov{\p_2 v}\big](s,0) =\\
	&\quad\!=\!
\left(\tau(s)\!
\left[\big(\p_{st}u
\big)(s,0)\!-\!\kp(s)\big(\p_s u\big)(s,0)\right]\! +\! \nu(s)\big(\p_{tt}u\big)(s,0)\right)
\!\cdot\!
\left(\tau(s)
\big(\ov{\p_s v}\big)(s,0)\! +\! \nu(s)\big(\ov{\p_t v}\big)(s,0)\right)\\
&\quad\!=\!\left[\big(\p_{st}u
\big)(s,0)-\kp(s)\big(\p_s u\big)(s,0)\right]\big(\ov{\p_s v}\big)(s,0)+\big(\p_{tt}u\big)(s,0)\big(\ov{\p_t v}\big)(s,0).
	\end{aligned}
	\]
	Using the above formulae and performing an integration by parts we can express the boundary term in~\eqref{eq:huv} as
	\[
	\begin{aligned}
		\frb[u,v] &:=-
		\int_{\p\Omg} 
		\left[\p_\nu(\p_1 u)\ov{\p_1 v}+
		\p_\nu(\p_2 u)\ov{\p_2 v}\right]\dd\s
		+
		\int_{\p\Omg} 
		\left[\p_\nu(\Delta u)-\tau\p_\nu u+\gamma u\right]\ov{v}\dd\s\\
		&=
		-\int_0^L
		\left[\big(\p_{st}u
		\big)(s,0)-\kp(s)\big(\p_s u\big)(s,0)\right]\big(\ov{\p_s v}\big)(s,0)\dd s
		-\int_0^L
		\big(\p_{tt}u\big)(s,0)\big(\ov{\p_tv}\big)(s,0)\dd s\\
		&\qquad\qquad +\int_0^L 
		\left[
		\big(\p_t(\Delta u)\big)(s,0)
		-\tau\big(\p_t u\big)(s,0)
		+\gg u(s,0)\right] \ov{v(s,0)}\dd s\\
		&=
		-\int_0^L
		\big(\p_{tt}u\big)(s,0)\big(\ov{\p_tv}\big)(s,0)\dd s\\
		&\qquad +\int_0^L 
		\left[
		\big(\p_s\left[\p_{st}u
		-\kp(s)\p_s u\right]-\tau\p_t u+
		\p_t(\Delta u)
		+\gg u\right](s,0) \ov{v(s,0)}\dd s.\\
		\end{aligned}
	\]
	By the first representation theorem we obtain that $u\in C^\infty_0(\ov{\Omg^\ext})$ belongs to $\dom\sfH_{\tau,\gg}^{\Omg^\ext}$ if, and only if, $\frb[u,v] = 0$ for any $v\in H^2(\Omg^\ext)$. Since by ~\cite[Thm. 1.5.2.1]{Gr} the range of the mapping $H^2(\Omg^\ext)\ni v\mapsto\{v(s,0),(\p_t v)(s,0)\}$ is dense in $L^2((0,L))\times L^2((0,L))$ we obtain that
	the condition
   $\frb[u,v] = 0$ for any $v\in H^2(\Omg^\ext)$ is equivalent to the fact that $u$ satisfies the boundary conditions in the formulation of the proposition.    
\end{proof}

\section{Auxiliary computations in the exterior of a disk}
\label{app:aux}
In this appendix we perform auxiliary computation in the exterior $\cB^\ext$ of the disk $\cB\subset\dR^2$ of radius $R >0$ centred at the origin. Throughout this appendix we work in the setting of Section~\ref{sec:ext_disk}.

Let $n,m\in\dZ$ be fixed.
Let $f,g\in L^2((R,\infty);r\dd r)$ be such that the functions 
\[
	u(r,\tt) = \frac{e^{\ii n\tt}}{\sqrt{2\pi}}f(r),\qquad
	v(r,\tt) = \frac{e^{\ii m\tt}}{\sqrt{2\pi}}g(r)
\]
belong in addition to the Sobolev space $H^2(\cB^\ext)$.
Our first aim is to
show orthogonality identities for these functions for $n\ne m$ and the second aim is to
 find a convenient expression for $\frh_{\tau,\gg}^{\cB^\ext}[u,v]$ for $n = m$. 

Using the expression for the gradient in polar coordinates given in~\eqref{eq:grad} we obtain
\begin{equation}\label{eq:polar}
\begin{aligned}
\int_{\cB^\ext}
\nabla \p_j u\ov{\nabla \p_jv}\dd x   
&=
\int_{\cB^\ext}
\left(\p_r \p_j u\ov{\p_r\p_j v} +
\frac{1}{r^2} \p_\tt\p_j u\ov{\p_\tt\p_j v}\right)\dd x,\qquad j\in\{1,2\},\\
\int_{\cB^\ext}
\nabla  u\ov{\nabla v}\dd x   
&=
\int_{\cB^\ext}
\left(\p_r  u\ov{\p_r v} +
\frac{1}{r^2} \p_\tt u\ov{\p_\tt v}\right)\dd x.
\end{aligned}
\end{equation}
Substituting the expression in polar coordinates for $\p_1$ given in~\eqref{eq:p1p2} we find the representation of $\p_r \p_1 u\ov{\p_r\p_1 v}$ in terms of $f$ and $g$
\[
\begin{aligned}
\p_r \p_1 u\ov{\p_r\p_1 v}
&=
\left(\cos\tt\p_{rr} u - \frac{\sin\tt\p_{r\tt}u}{r}
+ \frac{\sin\tt\p_\tt u}{r^2} \right)
\ov{\left(\cos\tt\p_{rr} v - \frac{\sin\tt\p_{r\tt}v}{r}
	+ \frac{\sin\tt\p_\tt v}{r^2} \right)}\\
& =
\frac{e^{\ii (n-m)\tt}}{2\pi}
\left(\cos\tt f'' - 
\ii n\sin\tt
\left(\frac{f'}{r}
- \frac{f}{r^2}\right) \right)
\ov{\left(\cos\tt g'' 
	-\ii m\sin\tt
	\left( \frac{g'}{r}
	- \frac{g }{r^2}\right) \right)}\\
&=\frac{e^{\ii(n-m)\tt}}{2\pi}
\bigg(\cos^2\tt f''\ov{g''}
+nm\sin^2\tt\left(\frac{f'}{r}-\frac{f}{r^2}\right)
\left(\frac{\ov{g'}}{r}-\frac{\ov{g}}{r^2}\right)\\
&\qquad +\sin\tt\cos\tt\left(-\ii n
\left(\frac{f'}{r}-\frac{f}{r^2}\right)\ov{g''} + \ii m f''\left(\frac{\ov{g'}}{r} -\frac{\ov{g}}{r^2}\right)\right)
\bigg). 	
\end{aligned}
\]
Analogously using the expression in polar coordinates for $\p_2$ given in~\eqref{eq:p1p2} we find the representation for $\p_r\p_2u\ov{\p_r\p_2v}$ in terms of $f$ and $g$
\[
\begin{aligned}
\p_r \p_2 u\ov{\p_r\p_2 v}
&=
\left(\sin\tt\p_{rr} u + \frac{\cos\tt\p_{r\tt}u}{r}
- \frac{\cos\tt\p_\tt u}{r^2} \right)
\ov{\left(\sin\tt\p_{rr} v + \frac{\cos\tt\p_{r\tt}v}{r}
	- \frac{\cos\tt\p_\tt v}{r^2} \right)}\\
& =
\frac{e^{\ii (n-m)\tt}}{2\pi}
\left(\sin\tt f'' + 
\ii n\cos\tt
\left(\frac{f'}{r}
- \frac{f}{r^2}\right) \right)
\ov{\left(\sin\tt g'' 
	+\ii m\cos\tt
	\left( \frac{g'}{r}
	- \frac{g }{r^2}\right) \right)}\\
&= \frac{e^{\ii(n-m)\tt}}{2\pi}
\bigg(\sin^2\tt f''\ov{g''} +nm\cos^2\tt
\left(\frac{f'}{r}-\frac{f}{r^2}\right)
\left(\frac{\ov{g'}}{r}-\frac{\ov{g}}{r^2}\right) \\
&\qquad +\sin\tt\cos\tt\left(\ii n\left(\frac{f'}{r}-\frac{f}{r^2}\right)\ov{g''} -\ii m f''\left(\frac{\ov{g'}}{r}-\frac{\ov{g}}{r^2}\right)\right)\bigg).
\end{aligned}
\]
Combining the above two expressions we get
\begin{equation}\label{eq:aux1}
	\p_r\p_1u\ov{\p_r\p_1v}
	+	\p_r\p_2u\ov{\p_r\p_2v} = 
	\frac{e^{\ii(n-m)\tt}}{2\pi}\left(f''\ov{g''} + nm \left(\frac{f'}{r}-\frac{f}{r^2}\right)\left(\frac{\ov{g'}}{r}-\frac{\ov{g}}{r^2}\right)\right).
\end{equation}
Next we find the expression for $\p_\tt\p_1 u\ov{\p_\tt\p_1 v}$ 
\[
\begin{aligned}
	\p_\tt\p_1 u\ov{\p_\tt\p_1 v} & =
	\left(\cos\tt\p_{r\tt}u-\sin\tt\p_r u
	-\frac{\cos\tt\p_\tt u +\sin\tt\p_{\tt\tt}u}{r}\right)
	\\
	&\qquad\cdot	\ov{	\left(\cos\tt\p_{r\tt}v-\sin\tt\p_r v
		-\frac{\cos\tt\p_\tt v +\sin\tt\p_{\tt\tt}v}{r}\right)}.\\
\end{aligned}
\]
Substituting the expressions for $u$ and $v$ in terms of $f$ and $g$ we get
\[
\begin{aligned}	
	\p_\tt\p_1 u\ov{\p_\tt\p_1 v}&=\frac{e^{\ii(n-m)\tt}}{2\pi}
	\left(
	\ii n\cos\tt f' -\sin\tt f' -\frac{(\ii n\cos\tt -n^2\sin\tt)f}{r}\right)\\
	&\qquad\cdot 
	\ov{\left(
	\ii m\cos\tt g' -\sin\tt g' -\frac{(\ii m\cos\tt -m^2\sin\tt)g}{r}\right)}\\
	&=\frac{e^{\ii(n-m)\tt}}{2\pi}
	\left(
	\ii n\cos\tt
	\left(f' -\frac{f}{r}\right) -\sin\tt
	\left( f'-\frac{n^2 f}{r}\right) \right)\\
	&\qquad\cdot 
	\ov{\left(
		\ii m\cos\tt
		\left(g' -\frac{g}{r}\right) -\sin\tt
		\left( g'-\frac{m^2 g}{r}\right) \right)}\\
	&= \frac{e^{\ii(n-m)\tt}}{2\pi}
		\bigg(mn\cos^2\tt\left(f'-\frac{f}{r}\right)
		\left(\ov{g'}-\frac{\ov{g}}{r}\right) +\sin^2\tt\left(f'-\frac{n^2f}{r}\right)\left(\ov{g'}-\frac{m^2\ov{g}}{r}\right)\\
		&\qquad +\sin\tt\cos\tt\left(-\ii n
		\left(f'-\frac{f}{r}\right)\left(\ov{g'}-\frac{m^2\ov{g}}{r}\right)+\ii m\left(f'-\frac{n^2f}{r}\right)	\left(\ov{g'}-\frac{\ov{g}}{r}\right)\right)\bigg).
\end{aligned}
\]
Analogously we obtain
\[
\begin{aligned}
	\p_\tt\p_2 u\ov{\p_\tt\p_2v} &=
	\left(\cos\tt\p_r u + \sin\tt\p_{r\tt}u + \frac{\cos\tt\p_{\tt\tt}u-\sin\tt\p_\tt u}{r}  \right)\\
	&\qquad \cdot\ov{	\left(\cos\tt\p_r v + \sin\tt\p_{r\tt}v + \frac{\cos\tt\p_{\tt\tt}v-\sin\tt\p_\tt v}{r}  \right)}.
\end{aligned}
\]
Substituting the expressions for $u$ and $v$ in terms of $f$ and $g$ we find
\[
\begin{aligned}
	\p_\tt\p_2u\ov{\p_\tt\p_2 v} &=
	\frac{e^{\ii(n-m)\tt}}{2\pi}
	\left(\cos\tt f' + \ii n\sin\tt f'
	+
	\frac{-n^2\cos\tt f-\ii n\sin\tt f}{r}
	\right)\\
	&\qquad \cdot
	\ov{\left(\cos\tt g' + \ii m\sin\tt g'
		+
		\frac{-m^2\cos\tt g-\ii m\sin\tt g}{r}\right)} \\
	&= \frac{e^{\ii(n-m)\tt}}{2\pi}
	\left(\ii n\sin\tt\left(f'-\frac{f}{r}\right)
	+
	\cos\tt\left(f'-\frac{n^2f}{r}   \right)\right)\\
	&\qquad\cdot\ov{	\left(\ii m\sin\tt\left(g'-\frac{g}{r}\right)
		+
		\cos\tt\left(g'-\frac{m^2g}{r}   \right)\right)}\\
	&=\frac{e^{\ii(n-m)\tt}}{2\pi}
	\bigg(mn\sin^2\tt\left(f'-\frac{f}{r}\right)\left(\ov{g'}-\frac{\ov{g}}{r}\right) + \cos^2\tt\left(f'-\frac{n^2f}{r}\right)\left(\ov{g'}-\frac{m^2\ov{g}}{r}\right)\\
	&\qquad +\sin\tt\cos\tt\left(\ii n
	\left(f'-\frac{f}{r}\right)\left(\ov{g'}-\frac{m^2\ov{g}}{r}\right)-\ii m\left(f'-\frac{n^2f}{r}\right)\left(\ov{g'}-\frac{\ov{g}}{r}\right)\right)\bigg).
\end{aligned}
\]
Combining the expressions for $\p_\tt\p_1u\ov{\p_\tt\p_1v}$ and
$\p_\tt\p_2u\ov{\p_\tt\p_2v}$
we obtain that
\begin{equation}\label{eq:aux2}
	\begin{aligned}
	&\frac{1}{r^2}\Big(\p_\tt\p_1u\ov{\p_\tt\p_1v}+\p_\tt\p_2u\ov{\p_\tt\p_2v}\Big) =\\
	&\qquad=	 \frac{e^{\ii(n-m)\tt}}{2\pi}\left(mn
	\left(\frac{f'}{r}-\frac{f}{r^2}\right)	\left(\frac{\ov{g'}}{r}-\frac{\ov{g}}{r^2}\right)+\left(\frac{f'}{r}-\frac{n^2f}{r^2}\right)\left(\frac{\ov{g'}}{r}-\frac{m^2\ov{g}}{r^2}\right)\right).
	\end{aligned}
\end{equation}
Finally, we find that
\begin{equation}\label{eq:aux3}
	\p_r u\ov{\p_r v} +\frac{1}{r^2}\p_\tt u\ov{\p_\tt v} = 
	\frac{e^{\ii(n-m)\tt}}{2\pi}\left(
	f'\ov{g'} + \frac{nm f\ov{g}}{r^2}
	\right).
\end{equation}
Combining~\eqref{eq:polar} with~\eqref{eq:aux1},~\eqref{eq:aux2} and~\eqref{eq:aux3} we infer that for $n\ne m$
\[
	\frh_{\tau,\gg}^{\cB^\ext}[u,v] = 0\qquad \int_{\cB^\ext}\left(\nb\p_1 u\ov{\nb\p_1v} + \nb\p_2 u\ov{\nb\p_2v}\right)\dd x =0,\qquad \int_{\cB^\ext}\nb u\ov{\nb v}\dd x =0,
\]
and moreover for $n = m$
\[	
\begin{aligned}
	\frh_{\tau,\gg}^{\cB^\ext}[u,v] & = 
	\int_R^\infty\bigg[f''\ov{g''} +
	\tau f'\ov{g'}+
	2n^2\left(\frac{f'}{r}-\frac{f}{r^2}\right)\left(\frac{\ov{g'}}{r}-\frac{\ov{g}}{r^2}\right)\\
	&\qquad\qquad\qquad + \left(\frac{f'}{r}-\frac{n^2f}{r^2}\right)\left(\frac{\ov{g'}}{r}-\frac{n^2\ov{g}}{r^2}\right)
	+\frac{\tau n^2f\ov{g}}{r^2}\bigg]r\dd r + \gg R f(R)\ov{g(R)}.
\end{aligned}	
\]
\end{appendix}

\newcommand{\etalchar}[1]{$^{#1}$}

\end{document}